\numberwithin{equation}{section}
\newtheorem{theorem}{Theorem}[section]
\newtheorem{proposition}[theorem]{Proposition}
\newtheorem{corollary}[theorem]{Corollary}
\newtheorem{lemma}[theorem]{Lemma}
\theoremstyle{definition}
\newtheorem{remark}[theorem]{Remark}
\newcommand{\R}{\mathbb{R}}
\begin{document}

\title
 [Kirchhoff type equations with general critical growth]
 {Multiple positive solutions for a class of Kirchhoff type problems involving general critical growth}

\begin{center}
\author{Liejun Shen}

\author{Xiaohua Yao}
\end{center}
\footnotetext{Email addresses: 	
liejunshen@sina.com (L. Shen), yaoxiaohua@mail.ccnu.edu.cn (X. Yao)}

\begin{center}
\address{(L. Shen and X. Yao) Hubei Key Laboratory of Mathematical Sciences and School of Mathematics and Statistics, Central China
Normal University, Wuhan, 430079, P. R. China }
\end{center}
\maketitle
\begin{abstract} In this paper, we study the following nonlinear Kirchhoff problems involving critical growth:
$$
\left\{%
\begin{array}{ll}
   -(a+b\int_{\Omega}|\nabla u|^2dx)\Delta u=|u|^4u+\lambda|u|^{q-2}u, \\
   u=0\ \ \text{on}\ \ \partial\Omega, \\
\end{array}%
\right.
$$
where $1<q<2$, $\lambda,\ a,\ b>0$ are parameters and $\Omega$ is a bounded domain in $\R^3$. We prove that there exists $\lambda_1=\lambda_1(q,\Omega)>0$ such that for any $\lambda\in(0,\lambda_1)$ and  $a,\ b>0$, the above Kirchhoff problem possesses at least two positive solutions and one of them is a positive ground state solution. We also establish the convergence property of the ground state solution as the parameter $b\searrow 0$.
More generally, we obtain the same results about the following Kirchhoff problem:
$$
\left\{%
\begin{array}{ll}
   -(a+b\int_{\mathbb{R}^3}|\nabla u|^2dx)\Delta u+u=Q(x)|u|^4u+{\lambda}f(x)|u|^{q-2}u, \\
   u\in H^1(\mathbb{R}^3), \\
\end{array}%
\right.
$$
 for any $a,\ b>0$ and $\lambda\in \big(0,\lambda_0(q,Q,f)\big)$ under certain conditions of $f(x)$ and $Q(x)$. Finally,
 we investigate the depending relationship between $\lambda_0$ and $b$ to
 show that for any (large) $\lambda>0$, there exists a $b_0(\lambda)>0$ such that the above results hold when $b>b_0(\lambda)$ and $a>0$.
\\

\end{abstract}

\section{Introduction and Main Results}

\setcounter{equation}{0}
In recent years, the following Kirchhoff type problem
\begin{equation}\label{a}
\left\{ \begin{gathered}
   - \Bigl( a+b\int_{\Omega}|\nabla u|^2dx \Bigr)\Delta u= f(x,u)\ \ \text{ in }\ \ \Omega, \hfill \\
 u=0\ \ \text{on}\ \ \partial\Omega, \hfill \\
\end{gathered}  \right.
\end{equation}
has been studied extensively by many researchers, here $f\in C(\Omega\times\R,\R)$, $\Omega\subset \R^N$, $N\ge 1$ and $a,b>0$ are constants.
Problems like \eqref{a} are seen to be nonlocal because of the appearance of the term $b(\int_{\Omega}|\nabla u|^2dx)\Delta u$ which implies that \eqref{a} is not a pointwise identity any more.  It is degenerate if $b=0$ and non-degenerate otherwise. The non-degenerate case causes some mathematical difficulties, which make the study of \eqref{a} interesting. The above problem \eqref{a} is related to the stationary analogue of the Kirchhoff equation
\begin{equation}\label{b}
 u_{tt}-\Bigl(a+b\int_{\Omega}|\nabla u|^2dx\Bigr)\Delta u=f(x,u)
\end{equation}
which was proposed by Kirchhoff in \cite{ap} as an extension of the classical D'Alembert's wave equation for free vibrations of elastic strings. For some mathematical and physical background on Kirchhoff type problems, we refer the readers to \cite{apa,aq,ar,as} and the references therein.
After J. L. Lions in his pioneer work \cite{bc} introduced an abstract functional analysis framework to \eqref{b}, the equation \eqref{b} has received an increasing attention on mathematical research. Recently, many important results about the the nonexistence, existence, and multiplicity of solutions for problem \eqref{a} have been obtained
with the nonlinear term $f(x,u)$ behaves like $|u|^{p-2}u$, here $2<p\leq2^*$, $2^*=\frac{2N}{N-2}$ if $N\geq 3$, $2^*=\infty$ if $N=1,2$. Please see for example
\cite{bl1,bl2,bj,bw,c} and the references therein.

In their celebrated paper, A. Ambrosetti, H. Br\'{e}zis, G. Cerami \cite{ckw} studied the following semilinear elliptic equation with concave-convex nonlinearities:
\begin{equation}\label{c}
\left\{ \begin{gathered}
   - \Delta u = |u|^{p-2}u+\lambda |u|^{q-2}u{\text{ in }}\Omega , \hfill \\
  u = 0{\text{ on }}\partial \Omega, \hfill \\
\end{gathered}  \right.
\end{equation}
where $\Omega$ is a bounded domain in $\R^N$ with $\lambda>0$ and $1<q<2<p\leq 2^*=\frac{2N}{N-2}$. By variational method, they have obtained the existence and multiplicity of positive solutions to the problem \eqref{c}. Subsequently, a series of similar results including the more general nonlinearity like $g(x)|u|^{p-2}u+\lambda f(x)|u|^{q-2}u$ are established, e.g. see \cite{cl1,cl2,cl3,cl4,cl5} and their reference therein. Some other types of multiplicity of positive solutions to the Schordinger or Schordinger-Poisson equations are established, see \cite{cl6,cl7,cl8,cl9} for example.

Using Nehair manifold and fibering map, C. Y. Chen, Y. C. Kuo, T. F. Wu \cite{h} extend the analysis to the Kirchhoff type equation
\begin{equation}\label{d}
\left\{ \begin{gathered}
   - M\Bigl(\int_{\Omega}|\nabla u|^2dx \Bigr)\Delta u =g(x)|u|^{p-2}u+\lambda h(x)|u|^{q-2}u{\text{ in }}{\Omega}, \hfill \\
 u = 0{\text{ on }}\partial \Omega, \hfill \\
\end{gathered}  \right.
\end{equation}
where $\Omega$ is a smooth bounded domain in $\R^N$ with $1<q<2<p<2^*=\frac{2N}{N-2}$. $M(s)=as+b$, the parameters $a,b,\lambda>0$ and the weight functions $h,g\in C(\overline{\Omega})$ satisfy some specified conditions,
they show that there exist at least two positive solutions when $0<\lambda<\lambda_0(a)$ and $4<p<6$ for the problem \eqref{d}, here $\lambda_0(a)$ strongly relies on $a>0$.
In addition, the existence
and multiplicity of solutions for Kirchhoff type problems in the whole space
$\R^N$ has been established in \cite{hb,hc,hc1} are taken into account.

In the critical growth case (i.e. $p=\frac{2N}{N-2}$), very recently, C. Y. Lei, G. S. Liu, L. T. Guo in \cite{i} consider the following Kirchhoff problem in three dimensions:
\begin{equation}\label{dd}
\left\{ \begin{gathered}
   - \Bigl(a+\epsilon\int_{\Omega}|\nabla u|^2dx \Bigr)\Delta u = u^5+\lambda u^{q-1}{\text{ in }}{\Omega}, \hfill \\
  u = 0{\text{ on }}\partial \Omega,\hfill \\
\end{gathered}  \right.
\end{equation}
where $\Omega$ is a smooth bounded domain in $\R^3$, $a>0$, $1<q<2$, $\epsilon>0$ is small enough, $\lambda>0$ is a positive real number. They obtained that there exists a $\lambda_\star>0$ such that when $\epsilon>0$ is small enough and $\lambda\in (0,\lambda_\star)$, the problem \eqref{dd} has at least two positive solutions.

Inspired by the works mentioned above, particularly, by the results in \cite{i}, we try to get the existence of positive solutions of the Kirchhoff problem for the critical growth. To be precise, we study the problem
\begin{equation}\label{f}
\left\{ \begin{gathered}
   - \Bigl(a+b\int_{\Omega}|\nabla u|^2dx \Bigr)\Delta u= |u|^4u+\lambda|u|^{q-2}u,{\text{ in }}{\Omega}, \hfill \\
 u = 0{\text{ on }}\partial \Omega. \hfill \\
\end{gathered}  \right.
\end{equation}
\vskip0.3cm
\begin{theorem}\label{corollary}
Let $1<q<2$. Then  there exists a $\lambda_1=\lambda_1(q,\Omega)>0$ such that
for any $\lambda\in(0,\lambda_1)$ and any $a,b>0$, the problem (\ref{f}) has at least two positive solutions in $H^1_0(\Omega)$, and one of the solutions is a positive ground state solution. In particular, let $\lambda\in(0,\lambda_1)$ be fixed, for any sequence $\{b_n\}$ with $b_n\searrow 0$ as $n\to \infty$, there exists a subsequence (still denoted by $\{b_n\}$) such that $u_{b_n}$ converges to $w_0$ strongly in $H^1_0(\Omega)$ as $n\to \infty$, where $u_{b_n}$ is a positive ground state solution of the problem \eqref{f} and $w_0$ is a solution of the problem
$$
\left\{
  \begin{array}{ll}
    -a\Delta u=|u|^4u+\lambda|u|^{q-2}u,\ \ \text{in}\ \ \Omega,  \\
    u=0\ \ \text{on}\ \ \partial\Omega.
  \end{array}
\right.
$$

\end{theorem}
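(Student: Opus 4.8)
The plan is to set up the variational framework on $H:=H^1_0(\Omega)$ for the energy functional
\[
I_{a,b}(u)=\frac{a}{2}\|\nabla u\|_2^2+\frac{b}{4}\|\nabla u\|_2^4-\frac16\int_\Omega (u_+)^6\,dx-\frac{\lambda}{q}\int_\Omega (u_+)^q\,dx,
\]
where $u_+=\max\{u,0\}$ is used so that any nontrivial critical point is, after testing with $u_-$ and invoking the strong maximum principle, a positive solution of \eqref{f}. Writing $S$ for the best constant of $H\hookrightarrow L^6(\Omega)$, I would follow the Nehari manifold and fibering map approach of Tarantello and Chen--Kuo--Wu: record $\phi_u(t)=I_{a,b}(tu)$ and split $\mathcal N=\{u\neq 0:\phi_u'(1)=0\}$ into $\mathcal N^+,\mathcal N^0,\mathcal N^-$ according to the sign of $\phi_u''(1)$. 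Since $1<q<2<6$, each $\phi_u$ is strictly negative for small $t>0$, so every fibre meets $\{I_{a,b}<0\}$; moreover on $\mathcal N$ one has the identity $I_{a,b}(u)=\tfrac a3\|\nabla u\|_2^2+\tfrac b{12}\|\nabla u\|_2^4-\lambda\big(\tfrac1q-\tfrac16\big)\|u_+\|_q^q$, which makes $I_{a,b}$ coercive and bounded below on $\mathcal N$.

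For the first (ground state) solution I would minimise $I_{a,b}$ over $\mathcal N$, the infimum being attained along $\mathcal N^+$. The decisive structural fact is that the Kirchhoff compactness threshold, read off from the radial bubble $w=\sigma U$ solving $-(a+b\|\nabla w\|_2^2)\Delta w=w^5$ in $\mathbb R^3$, is the strictly positive number
\[
c^*(a,b)=\frac{a}{3}S^{3/2}\sigma^2+\frac{b}{12}S^{3}\sigma^4,\qquad \sigma^2=\frac{bS^{3/2}+\sqrt{b^2S^{3}+4a}}{2}.
\]
Because $\inf_{\mathcal N}I_{a,b}<0<c^*(a,b)$, any minimising sequence sits at a subcritical level, so a Brezis--Lieb concentration--compactness argument excludes the escape of mass at the critical exponent and produces a positive minimiser $u_1\in\mathcal N^+$, a positive ground state; this step imposes no upper bound on $\lambda$.

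For the second solution I would minimise over $\mathcal N^-$, equivalently run a mountain pass over $u_1$, and the critical growth genuinely bites: the minimax level $c^-$ must be shown to satisfy $c^-<c^*(a,b)$. Using truncated Aubin--Talenti instantons $U_\varepsilon$, the sharp three-dimensional Brezis--Nirenberg expansions of $\|\nabla U_\varepsilon\|_2^2$ and $\|U_\varepsilon\|_6^6$, and a lower bound on the subcritical contribution $\|U_\varepsilon\|_q^q$, I would estimate $\sup_{t\geq 0}I_{a,b}(tU_\varepsilon)$ and verify that the concave term strictly depresses the level below $c^*(a,b)$, overcoming the lower-order truncation error of the three-dimensional estimate, precisely when $\lambda<\lambda_1$; recovering the Palais--Smale condition below $c^*(a,b)$ then yields a second positive critical point $u_2\in\mathcal N^-$. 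I expect this test-function estimate to be the main obstacle, and I expect the threshold $\lambda_1$ to be rendered independent of $a,b$ by exploiting that enlarging $a$ or $b$ only raises $c^*(a,b)$ while leaving the dominant balance in the estimate controlled by $q$ and $\Omega$ alone.

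Finally, for the convergence as $b\searrow0$ I would fix $\lambda\in(0,\lambda_1)$ and note that for small $u$ the quartic term is negligible against the concave term, so the ground state energies $I_{a,b_n}(u_{b_n})$ stay between a fixed negative constant and $0$ uniformly in $n$; the coercivity identity then bounds $\{u_{b_n}\}$ in $H$. Passing to a weak limit $u_{b_n}\rightharpoonup w_0$ and using $b_n\|\nabla u_{b_n}\|_2^2\to 0$, I would let $n\to\infty$ in the weak formulation to see that $w_0$ solves the limiting problem with $b=0$. The uniform negative energy bound forces $w_0\neq 0$, and since the limiting level lies below the $b=0$ threshold $\tfrac13(aS)^{3/2}$, a final Brezis--Lieb argument upgrades the convergence to strong convergence in $H$, completing the proof.
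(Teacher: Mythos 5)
Your overall architecture is viable and your compactness threshold is the correct one: writing $\mu=S^{3/2}\sigma^2$, your quantity $c^*(a,b)=\frac{a}{3}S^{3/2}\sigma^2+\frac{b}{12}S^3\sigma^4$ coincides exactly with the number $\frac{abS^3}{4}+\frac{b^3S^6}{24}+\frac{(b^2S^4+4aS)^{3/2}}{24}$ that the paper extracts from the concentration--compactness alternative, and the instanton estimate you single out is indeed where the restriction on $\lambda$ enters. Your route is, however, genuinely different from the paper's: instead of the Nehari decomposition $\mathcal{N}^{+},\mathcal{N}^{0},\mathcal{N}^{-}$, the paper runs a mountain pass (first solution, at a positive level $c$) together with Ekeland's variational principle on a small closed ball (second solution, a local minimizer at a negative level $c_1$), and its ground state is obtained by minimizing $I$ over the set of \emph{all} critical points rather than over the Nehari manifold; the authors explicitly set aside the Chen--Kuo--Wu fibering approach. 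Your route can be made to work on a bounded domain, but it requires the extra verification that $\mathcal{N}^{0}=\emptyset$ and that minimizers on $\mathcal{N}^{\pm}$ are genuine critical points, which already forces $\lambda$ to be small.

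The genuine gap is in your treatment of the negative-level minimization. You assert that because $\inf_{\mathcal{N}}I<0<c^*(a,b)$ the minimizing sequence ``sits at a subcritical level'' so that compactness follows, and that ``this step imposes no upper bound on $\lambda$.'' This is not correct as stated: if a $(PS)_c$ sequence has a nonzero weak limit $u_0$ together with a concentration bubble, the identity $c=\lim\big(I(u_n)-\tfrac14\langle I'(u_n),u_n\rangle\big)$ only yields
\[
c\;\ge\; c^*(a,b)+\Bigl(\tfrac14\|u_0\|^2-C\lambda\|u_0\|_q^q\Bigr),
\]
and the bracket can be as negative as $-C_1\lambda^{2/(2-q)}$. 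Hence the correct compactness range is $c<c^*(a,b)-C_1\lambda^{2/(2-q)}$ --- this is precisely Lemma 2.3 of the paper, and it is why Lemma 2.2 there proves the strict inequality $c<c^*(a,b)-C_1\lambda^{2/(2-q)}$ rather than merely $c<c^*(a,b)$. A negative level lies below this corrected threshold only when $C_1\lambda^{2/(2-q)}\le c^*(a,b)$, so a smallness condition on $\lambda$ is needed at this step as well, and the same correction must be carried through your $\sup_{t\ge0}I_{a,b}(tU_\varepsilon)$ estimate for the second solution. (You could instead salvage the ground-state step with a full Struwe-type energy splitting, noting that $I(u_0)\ge\inf_{\mathcal{N}}I$ forces the bubble count to be zero, but that is an additional argument you did not supply.) The convergence argument as $b_n\searrow0$ is essentially the paper's and is fine.
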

\vskip0.3cm
\begin{remark}\label{1.1.}
 Comparing with \cite{i}, we only assume $b>0$ is a positive constant not necessary to be small enough. Therefore, we greatly relax the constraints on the parameter $\epsilon$ in \cite{i}. Moreover, we  obtain the above convergence property of ground solution of the problem (\ref{f}) as $b\searrow 0$.
\end{remark}
\vskip0.3cm
In this paper we will not give the direct proof of Theorem \ref{corollary}, which can be seen as a corollary of the following general problem
\begin{equation}\label{g}
\left\{ \begin{gathered}
   - \Bigl(a+b\int_{\R^3}|\nabla u|^2dx \Bigr)\Delta u + u = Q(x)|u|^4u+\lambda f(x)|u|^{q-2}u,{\text{ in }}{\mathbb{R}^3}\hfill \\
 {\text{ }}u \in {H^1}({\mathbb{R}^3})\hfill \\
\end{gathered}  \right.
\end{equation}
under some assumptions on the weight functions $Q(x)$ and $f(x)$. Actually, we stress here that the method dealing with problem \eqref{g} can be applied directly in the problem \eqref{f} just by simply modifying some notations such as $Q(x)=f(x)\equiv 1$, and $\Omega$ instead of $\R^3$ in problem \eqref{f}.

Now the main results for \eqref{g} can be stated as follows:
\vskip0.3cm
\begin{theorem}\label{main theorem}
Assume $1<q<2$, and the functions $Q,f$ satisfy the following conditions:\\
$(i)$ $0<f(x)\in L^{\frac{6}{6-q}}(\R^3)$;\ $(ii)$ $Q(x)\geq 0$, and $\exists x_0\in \mathbb{R}^3$, $\alpha\in(\frac{q}{2},+\infty)$, $\rho>0$ such that $$Q(x_0)=\max\limits_{x\in \mathbb{R}^3}Q(x):=|Q|_{\infty}<+\infty$$ and
$$
|Q(x)-Q(x_0)|\leq C|x-x_0|^\alpha\ \ \text{whenever} \ \ |x-x_0|<\rho,
$$
for some constant $C>0$. Then there exists  a $\lambda_0=\lambda_0(q,Q,f)>0$ such that
for any $\lambda\in(0,\lambda_0)$ and any $a,b>0$, the problem \eqref{g} has at least two positive solutions in $H^1(\R^3)$, and one of the solutions is a positive ground state solution.
\end{theorem}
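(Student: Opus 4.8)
The plan is to attack \eqref{g} with the Nehari-manifold and fibering-map technique of Ambrosetti--Br\'ezis--Cerami and Tarantello, adapted to the nonlocal critical setting. I would work in $H:=H^1(\R^3)$ with the energy
\begin{equation*}
I(u)=\frac a2\int_{\R^3}|\nabla u|^2+\frac12\int_{\R^3}u^2+\frac b4\Bigl(\int_{\R^3}|\nabla u|^2\Bigr)^2-\frac16\int_{\R^3}Q|u|^6-\frac\lambda q\int_{\R^3}f|u|^q.
\end{equation*}
By assumption $(i)$ and H\"older's inequality, $\int f|u|^q\le|f|_{6/(6-q)}|u|_6^q$, so the concave term is well defined and $I\in C^1(H,\R)$, its nonzero critical points solving \eqref{g}. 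To produce \emph{positive} solutions I would place $u^+$ inside the two nonlinear terms, so that any nontrivial critical point $u$ satisfies $-(a+b\int|\nabla u|^2)\Delta u+u\ge0$ and is strictly positive by the strong maximum principle.

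For $u\ne0$ set $\phi_u(t)=I(tu)$; with $A=\int|\nabla u|^2$, $B=\int u^2$, $P=\int Q|u|^6$, $F=\int f|u|^q$ one has
\begin{equation*}
\phi_u'(t)=t(aA+B)+bt^3A^2-t^5P-\lambda t^{q-1}F,
\end{equation*}
and $tu\in\mathcal N$ exactly when $\phi_u'(t)=0$. Splitting $\mathcal N=\mathcal N^+\cup\mathcal N^0\cup\mathcal N^-$ by the sign of $\phi_u''$, a direct computation should produce an explicit $\lambda_0=\lambda_0(q,Q,f)>0$ such that for $\lambda\in(0,\lambda_0)$ one has $\mathcal N^0=\{0\}$, every ray meets $\mathcal N^+$ and $\mathcal N^-$ in a single point, $\mathcal N^\pm$ are $C^1$ manifolds, and $I$ is coercive and bounded below on $\mathcal N$. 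The first (ground state) solution comes from minimizing on $\mathcal N^+$: one shows $c_1:=\inf_{\mathcal N^+}I<0$ while $\inf_{\mathcal N^-}I>0$, so $c_1=\inf_{\mathcal N}I$. Since negative energy levels lie strictly below the compactness threshold $c^*$ discussed next, a minimizing sequence converges (after the usual extraction) to some $u_0\in\mathcal N^+$, and $\mathcal N^0=\{0\}$ lets one run the Lagrange-multiplier argument to conclude that $u_0$ is a genuine critical point; it is then a positive ground state.

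The second solution, on $\mathcal N^-$, is the crux, the difficulty being the failure of compactness of $H\hookrightarrow L^6$. I would first establish that $I$ (or its restriction to $\mathcal N$) satisfies $(PS)_c$ for every $c$ below the Kirchhoff one-bubble level, of the form
\begin{equation*}
c^*=\frac{a\Sigma}3\,s+\frac{b\Sigma^2}{12}\,s^2,\qquad \Sigma=S^{3/2},\quad s=\frac{b\Sigma+\sqrt{b^2\Sigma^2+4a|Q|_\infty}}{2|Q|_\infty},
\end{equation*}
with $S$ the best Sobolev constant. The genuinely nonlocal subtlety here is that for $u_n\rightharpoonup u$ the coefficient $a+b\int|\nabla u_n|^2$ tends to some $a+bt_0$ with $t_0\ge\int|\nabla u|^2$, so the weak limit solves a \emph{perturbed} equation and the energy carried off by a concentrating bubble must be computed with the shifted coefficient in order to pin down $c^*$. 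The hardest step is then the strict estimate $\inf_{\mathcal N^-}I<c^*$: inserting truncated Aubin--Talenti bubbles $u_\varepsilon$ centered at the maximizer $x_0$ of $Q$ and estimating $\max_{t\ge0}I(tu_\varepsilon)$, the leading term is exactly $c^*$, the $L^2$ term together with the deviation of $Q$ from $|Q|_\infty$ give a positive error of order $\varepsilon^{\min\{\alpha,1\}}$ (this is precisely where $|Q(x)-Q(x_0)|\le C|x-x_0|^\alpha$ is used), while the concave term contributes a strictly negative gain of order $\lambda\varepsilon^{q/2}$. Since $\alpha>\tfrac q2$ and $\tfrac q2<1$, this gain dominates all positive errors for $\varepsilon$ small, yielding $\inf_{\mathcal N^-}I<c^*$. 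Compactness then produces a minimizer $u_1\in\mathcal N^-$, a second positive solution, distinct from $u_0$ because $\mathcal N^+\cap\mathcal N^-=\varnothing$.
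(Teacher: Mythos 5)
Your outline is essentially sound and your one-bubble level $c^*$ is exactly the threshold the paper computes (your $\frac{a\Sigma}{3}s+\frac{b\Sigma^2}{12}s^2$ expands to the paper's $\frac{abS^3}{4|Q|_\infty}+\frac{b^3S^6}{24|Q|_\infty^2}+\frac{(b^2S^4+4a|Q|_\infty S)^{3/2}}{24|Q|_\infty^2}$), and your treatment of the bubble estimate at $x_0$ — gain $\lambda\varepsilon^{q/2}$ beating the errors $O(\varepsilon)+O(\varepsilon^{\alpha\wedge 3})$ because $\frac q2<\min\{\alpha,1\}$ — is precisely how the paper uses hypothesis $(ii)$. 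The route, however, is genuinely different: the paper deliberately avoids the Nehari/fibering decomposition (it remarks that the Chen--Kuo--Wu machinery does not transplant directly because the natural norm on $H^1(\R^3)$ is not the Dirichlet norm) and instead gets the first solution from the Mountain Pass theorem and the second from Ekeland's variational principle on a small ball $\overline B_\beta$ where $\inf_{\overline B_\beta}I<0$; the ground state is then obtained by minimizing $I$ over the full critical set $\{I'(u)=0\}$ rather than over $\mathcal N^+$. Your fibering analysis does survive the extra $bt^{4-q}A^2$ term (the reduced function is still increasing-then-decreasing), so both routes are viable; the paper's buys freedom from verifying $\mathcal N^0=\{0\}$ and the Lagrange-multiplier step, yours buys a cleaner identification of the two solutions as $\inf_{\mathcal N^+}I<0<\inf_{\mathcal N^-}I$.

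The one place your outline would not close as written is the compactness claim. You assert $(PS)_c$ for \emph{every} $c<c^*$, but when a Palais--Smale sequence has a nonzero weak limit $u$ together with a concentrating bubble, the identity $c=\lim\bigl(I(u_n)-\tfrac14\langle I'(u_n),u_n\rangle\bigr)$ only yields $c\ge c^*+\bigl(\tfrac14\|u\|^2-C\lambda\|u\|^q\bigr)$, and the bracket can be as negative as $-C_1\lambda^{2/(2-q)}$ because of the concave term. So the provable statement is $(PS)_c$ for $c<c^*-C_1\lambda^{2/(2-q)}$, and correspondingly you must prove the strict inequality $\max_{t\ge0}I(tu_\varepsilon)<c^*-C_1\lambda^{2/(2-q)}$, not merely $<c^*$. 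This still works — one chooses $\varepsilon=\varepsilon(\lambda)$ so that the gain $C\lambda\varepsilon^{q/2}$ exceeds both the $O(\varepsilon^{\min\{\alpha,1\}})$ errors and the margin $C_1\lambda^{2/(2-q)}$, which is possible since $\tfrac{2}{2-q}>1$ — but without this correction the argument for the second solution has a genuine hole.
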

\vskip0.3cm
\begin{remark}\label{remark1}\
$(1)$ The condition $(i)$ in Theorem \ref{main theorem} that \emph{$f(x)$ is positive} is just for simplicity. In fact, our method can deal with the case that \emph{$f(x)$ is sign-changing}.

 $(2)$ It is clear that $Q(x)\equiv 1$ satisfies the condition $(ii)$ in Theorem \ref{main theorem}. Also, $Q(x_0)>0$ is necessary  and otherwise $Q(x)\equiv 0$ implies that the critical term disappears. The general critical term $Q(x)|u|^4u$ was firstly introduced in \cite{jj} by F. Gazzola and M. Lazzarino,
 then this condition has already been extended to Sch\"{o}rdinger-Possion system in \cite{jj2}
and Kirchhoff type problem in \cite{jj3} with $\alpha\in[1,3)$. Compared with \cite{jj2}
and \cite{jj3}, the condition $(ii)$ has a bit improvement, since we just assume that $\alpha\in(\frac{q}{2},+\infty)$ for
$q\in(1,2)$.
\end{remark}
\vskip0.3cm
By the results in Theorem 1.3, we know that the constant $\lambda_0$ is independent of $b>0$.  So we are interested to know what happens if $b>0$ is sufficiently small in our problem \eqref{g}. Here we can give the following theorem:
\begin{theorem}\label{theorem2} Under the assumptions of Theorem \ref{main theorem} and $\lambda\in(0,\lambda_0)$ is fixed, then for any sequence $\{b_n\}$ with $b_n\searrow 0$ as $n\to \infty$, there exists a subsequence (still denoted by $\{b_n\}$) such that $u_{b_n}$ convergent to $w_0$ strongly in $H^1(\R^3)$ as $n\to \infty$, where $u_{b_n}$ is a positive ground state solution to problem \eqref{g} and $w_0$ is a solution of the problem
\begin{equation}\label{h}
\left\{ \begin{gathered}
   - a\Delta u + u = Q(x)|u|^4u+\lambda f(x)|u|^{q-2}u,{\text{ in }}{\mathbb{R}^3}, \hfill \\
 {\text{ }}u \in {H^1}({\mathbb{R}^3}). \hfill \\
\end{gathered}  \right.
\end{equation}
\end{theorem}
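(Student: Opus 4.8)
The plan is to treat Theorem \ref{theorem2} as a degeneration result for the family of ground states $\{u_{b_n}\}$ as the Kirchhoff coefficient collapses. Write $I_b$ and $I_0$ for the energy functionals attached to \eqref{g} and \eqref{h}, and abbreviate $\|u\|_a^2=a\int_{\R^3}|\nabla u|^2dx+\int_{\R^3}u^2dx$ and $A(u)=\int_{\R^3}|\nabla u|^2dx$, so that $I_b(u)=\frac12\|u\|_a^2+\frac{b}{4}A(u)^2-\frac16\int_{\R^3}Q|u|^6dx-\frac{\lambda}{q}\int_{\R^3}f|u|^qdx$. I will use the structural fact, supplied by the construction behind Theorem \ref{main theorem}, that the positive ground state $u_b$ is the negative-energy local minimizer realized as $c_b:=I_b(u_b)=\inf_{\overline{B}_\rho}I_b<0$ over a closed ball $\overline{B}_\rho\subset H^1(\R^3)$ whose radius $\rho$ can be chosen independently of $b$ (the term $\frac{b}{4}A(u)^2\ge0$ only raises the functional and hence preserves the local geometry). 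In particular $\|u_{b_n}\|<\rho$ for every $n$, so $\{u_{b_n}\}$ is bounded in $H^1(\R^3)$ at no cost, and after passing to a subsequence I may assume $u_{b_n}\rightharpoonup w_0$ in $H^1(\R^3)$, with $u_{b_n}\to w_0$ in $L^p_{\mathrm{loc}}$ for $p<6$ and a.e., and $A(u_{b_n})\to A_0$.

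Next I would show that $w_0$ solves \eqref{h}. Testing $I_b'(u_{b_n})=0$ against an arbitrary $\varphi\in H^1(\R^3)$, the nonlocal term $b_nA(u_{b_n})\int_{\R^3}\nabla u_{b_n}\cdot\nabla\varphi\,dx$ vanishes as $n\to\infty$ because $b_n\to0$ while the remaining factors stay bounded; the linear terms pass to the limit by weak convergence; the critical term passes because $\{|u_{b_n}|^4u_{b_n}\}$ is bounded in $L^{6/5}(\R^3)$ and converges a.e. to $|w_0|^4w_0$, hence weakly in $L^{6/5}(\R^3)$, while $Q\varphi\in L^6(\R^3)$; and the concave term passes by the compactness of $u\mapsto\int_{\R^3}f|u|^qdx$ on bounded sets guaranteed by $f\in L^{6/(6-q)}(\R^3)$. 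This gives $I_0'(w_0)=0$, so $w_0$ is a weak solution of \eqref{h}.

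For the strong convergence, set $v_n=u_{b_n}-w_0\rightharpoonup0$. Applying the Br\'ezis--Lieb lemma to $\|\cdot\|_a^2$ and to the weighted term $\int Q|\cdot|^6$, using $b_n\to0$ for the Kirchhoff term and the compactness above for the concave term, I obtain the energy splitting $c_{b_n}=I_0(w_0)+\bigl(\tfrac12\|v_n\|_a^2-\tfrac16\int_{\R^3}Q|v_n|^6dx\bigr)+o(1)$. Testing the two Euler--Lagrange identities against $u_{b_n}$ and $w_0$ respectively and subtracting yields $\|v_n\|_a^2-\int_{\R^3}Q|v_n|^6dx=o(1)$; hence, writing $k:=\lim\|v_n\|_a^2=\lim\int_{\R^3}Q|v_n|^6dx$ along a further subsequence, the splitting collapses to $c_{b_n}\to I_0(w_0)+\tfrac13k$.

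Finally I would close the loop through the level $c_0:=\inf_{\overline{B}_\rho}I_0$. Since $\sup_{\overline{B}_\rho}|I_{b_n}-I_0|\le\frac{b_n}{4}\rho^4\to0$, one gets $c_{b_n}=\inf_{\overline{B}_\rho}I_{b_n}\to c_0$, while $\|w_0\|\le\liminf\|u_{b_n}\|\le\rho$ places $w_0\in\overline{B}_\rho$ and so $I_0(w_0)\ge c_0$. Combining with $c_{b_n}\to I_0(w_0)+\tfrac13k=c_0$ forces $\tfrac13k=c_0-I_0(w_0)\le0$, whence $k=0$; as $\|\cdot\|_a$ is equivalent to the $H^1$-norm, this is precisely $u_{b_n}\to w_0$ strongly in $H^1(\R^3)$, and en route $I_0(w_0)=c_0<0$ shows $w_0\not\equiv0$ (positivity then following from $u_{b_n}>0$ and the maximum principle). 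I expect the genuine difficulty to be the critical exponent: a priori a fixed fraction $k>0$ of the $H^1$-mass could peel off into a concentrating Sobolev bubble, and the role of the $b$-independent ball together with the identity $c_{b_n}\to I_0(w_0)+\tfrac13k$ is exactly to rule this out by confronting it with the lower bound $I_0(w_0)\ge c_0=\lim c_{b_n}$.
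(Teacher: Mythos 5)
Your overall architecture --- the weak limit solves \eqref{h} because the nonlocal term is $O(b_n)$, a Br\'ezis--Lieb splitting of the energy and of the Nehari identity, and a conclusion $k=0$ by energy comparison --- is genuinely different from the paper, which instead bounds $m_{b_n}$ uniformly by the mountain-pass threshold, derives boundedness of $\{u_{b_n}\}$ from $I_{b_n}(u_{b_n})-\frac14\langle I_{b_n}'(u_{b_n}),u_{b_n}\rangle$, and then reruns the concentration-compactness argument of Lemma \ref{2.3} to obtain strong convergence. However, there is a genuine gap in the step you label a ``structural fact'': nothing in the construction behind Theorem \ref{main theorem} identifies the ground state $u_b$ with the local minimizer on a ball. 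The paper defines $m_b=\inf\{I_b(u):I_b'(u)=0\}$ and proves only $m_b\le I_b(u_1)=c_1<0$, where $u_1$ is the Ekeland minimizer on $\overline{B}_\beta$; it shows neither that $m_b=\inf_{\overline{B}_\beta}I_b$ nor that the ground state lies in $\overline{B}_\beta$. Since $I_b$ tends to $-\infty$ along rays, a negative-energy critical point is not automatically confined to the ball on which the mountain-pass geometry holds. Your entire closing argument --- $c_{b_n}=\inf_{\overline{B}_\rho}I_{b_n}\to c_0$, $w_0\in\overline{B}_\rho$, hence $I_0(w_0)\ge c_0=\lim c_{b_n}=I_0(w_0)+\frac13 k$ --- rests on this identification; without it you have no upper bound $\lim m_{b_n}\le I_0(w_0)$ and cannot exclude $k>0$, i.e.\ a concentrating bubble carrying a fixed quantum of critical mass.

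The gap is repairable, but it requires an argument you have not supplied. From $\langle I_b'(u),u\rangle=0$ and $I(u)=I(u)-\frac16\langle I_b'(u),u\rangle<0$ one gets $\frac13\|u\|^2\le\lambda\,\frac{6-q}{6q}|f|_{\frac{6}{6-q}}S^{-\frac{q}{2}}\|u\|^q$, so every negative-energy critical point satisfies $\|u\|\le C\lambda^{\frac{1}{2-q}}$ with $C$ independent of $b$; if $\lambda$ is small enough that $C\lambda^{\frac{1}{2-q}}<\beta$, then the ground state does lie in $\overline{B}_\beta$ and your identification $m_b=\inf_{\overline{B}_\beta}I_b$ follows. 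Alternatively, you can bypass the variational comparison entirely, in the spirit of the paper: if $k>0$, then $\|v_n\|_a^2-\int_{\R^3}Q|v_n|^6dx=o(1)$ together with the Sobolev inequality forces $k\ge aS(k/|Q|_\infty)^{1/3}$, hence $k\ge(aS)^{3/2}|Q|_\infty^{-1/2}$, and combining $\lim m_{b_n}=I_0(w_0)+\frac13 k$ with the coercivity bound $I_0(w_0)\ge-C\lambda^{\frac{2}{2-q}}$ contradicts $\limsup m_{b_n}<0$. One of these patches must be made explicit; as written the proof is incomplete at its pivotal step.
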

\vskip0.3cm
For $\lambda_0$ (independent on $b$) in Theorem \ref{main theorem} and Theorem \ref{theorem2}, it is unknown to us whether the constant $\lambda_0$ is a best one or not for all $b>0$.  However, for a fixed large $b>0$,  it is far from the best constant. Indeed, in the following theorem we have proved that there exists $\tilde{\lambda}_0>\lambda_0$  such that results above holds true for any $\lambda\in(0,\tilde{\lambda}_0)$ and any $a>0$.
\vskip0.3cm
\begin{theorem}\label{theorem3}
Let $q,Q,f$ satisfy the assumptions as the Theorem \ref{main theorem}, then for any $b>0$, there exists  $\tilde{\lambda}_0=Cb^{\frac{6-q}{2}}$, here $C=C(q,Q,f)>0$ is independent on $b$, such that
for any $\lambda\in(0,\tilde{\lambda}_0)$ and any $a>0$, the problem \eqref{g} has at least two positive solutions in $H^1(\R^3)$, and one of the solutions is a positive ground state solution.
\end{theorem}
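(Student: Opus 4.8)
The plan is to reproduce the variational scheme behind Theorem \ref{main theorem} — working with the energy functional
$$
I_{a,b}(u)=\frac a2\int_{\R^3}|\nabla u|^2dx+\frac12\int_{\R^3}u^2dx+\frac b4\Big(\int_{\R^3}|\nabla u|^2dx\Big)^2-\frac16\int_{\R^3}Q(x)|u|^6dx-\frac\lambda q\int_{\R^3}f(x)|u|^qdx
$$
on $H^1(\R^3)$, its Nehari manifold $\mathcal N_{a,b}$ and the fibering maps $\phi_u(t)=I_{a,b}(tu)$ — but now tracking explicitly how each restriction on $\lambda$ depends on $b$. There are exactly two places where an upper bound on $\lambda$ is used: $(\mathrm{C1})$ the splitting $\mathcal N_{a,b}=\mathcal N^+\cup\mathcal N^0\cup\mathcal N^-$ must be nondegenerate, which is what produces the local minimizer (first solution) and makes the constrained minimization on $\mathcal N^-$ well defined; and $(\mathrm{C2})$ the Palais--Smale condition must hold below the first noncompact energy level $c^*_b$, which is what yields the second (mountain-pass type) solution. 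The uniform constant $\lambda_0$ of Theorem \ref{main theorem} is obtained by imposing $(\mathrm{C1})$--$(\mathrm{C2})$ for \emph{every} $b>0$ at once (the worst case being $b\to0^+$); here I instead fix $b$ and compute the genuine $b$-dependent thresholds, the key claim being that both scale like $b^{(6-q)/2}$, so that for large $b$ they exceed $\lambda_0$.

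The heart of the matter is $(\mathrm{C1})$. Writing $A=\int_{\R^3}|\nabla u|^2dx$ and $B=\int_{\R^3}Q(x)|u|^6dx$, the equation $\phi_u'(t)=0$ is equivalent, after division by $t^{q-1}$, to $\lambda\int_{\R^3}f|u|^qdx=\psi_u(t)$ where
$$
\psi_u(t)=a\,t^{2-q}A+t^{2-q}\!\int_{\R^3}u^2dx+b\,t^{4-q}A^2-t^{6-q}B,
$$
and the nondegenerate three-part splitting holds precisely when $\lambda\int_{\R^3}f|u|^qdx<\max_{t>0}\psi_u(t)$ for the relevant $u$. Discarding the two nonnegative lower-order terms and maximizing the remaining competition $h(t)=b\,t^{4-q}A^2-t^{6-q}B$ gives $\max_{t>0}h=C_q\,b^{(6-q)/2}A^{6-q}B^{-(4-q)/2}$; inserting the Sobolev bound $B\le|Q|_\infty S^{-3}A^3$ collapses the $A$-powers and yields the uniform lower bound
$$
\max_{t>0}\psi_u(t)\ \ge\ C(q,Q)\,b^{(6-q)/2}\,\|\nabla u\|_2^{\,q}.
$$
On the other hand, condition $(i)$ together with the Sobolev embedding gives $\int_{\R^3}f|u|^qdx\le|f|_{6/(6-q)}S^{-q/2}\|\nabla u\|_2^{\,q}$. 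The two estimates have the same homogeneity $\|\nabla u\|_2^{\,q}$, which cancels, so $(\mathrm{C1})$ holds for every $u$ as soon as $\lambda<Cb^{(6-q)/2}$ with $C=C(q,Q,f)$ independent of both $b$ and $a$. This is exactly the threshold $\tilde\lambda_0$ claimed; note that $a>0$ entered only through the discarded nonnegative term, which is why the bound is $a$-free.

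For the second solution I would run the same minimization over $\mathcal N^-$ (equivalently a mountain-pass argument), for which the only extra ingredient is compactness: one must show the relevant minimax level $c_\lambda$ lies strictly below the first noncompact level $c^*_b$, obtained from the constant-coefficient problem with $Q$ frozen at $|Q|_\infty$. A direct fibering computation on an Aubin--Talenti bubble gives $c^*_b\to\infty$ as $b\to\infty$ (of order $b^3$), so the compactness window only widens with $b$; the task is therefore to re-verify the test-function energy estimate $\sup_{t\ge0}I_{a,b}(u^+_\lambda+tU_\ep)<m_\lambda+c^*_b$ (with $u^+_\lambda$ the first solution and $m_\lambda$ its energy) uniformly for all $\lambda$ up to the enlarged threshold $Cb^{(6-q)/2}$. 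I expect this to be the main obstacle: one has to control the cross terms between the concentrating bubble, the first solution and the now-large Kirchhoff term, and check that the concave perturbation $\lambda f|u|^q$ does not spoil the strict inequality in this wider $\lambda$-range — precisely the estimate whose $b$-uniform version underlies Theorem \ref{main theorem}, here needed in its sharp $b$-dependent form. Finally, identifying the lower-energy of the two solutions as a global minimizer of $I_{a,b}$ over $\mathcal N_{a,b}$ gives the positive ground state, and the positivity of both solutions follows by the usual replacement of $u$ by $|u|$ and the strong maximum principle, exactly as in the proof of Theorem \ref{main theorem}.
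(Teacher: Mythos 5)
Your computation of the $b^{(6-q)/2}$ threshold is correct and is in substance the same device the paper uses in its Lemma 5.1: discard the $a$-dependent quadratic term and let the Kirchhoff quartic $\frac b4(\int_{\R^3}|\nabla u|^2dx)^2$ alone dominate both the critical term and the concave term; after the Sobolev bound the homogeneities in $\|\nabla u\|_2$ cancel and one gets $\lambda<C(q,Q,f)\,b^{(6-q)/2}$ independently of $a$. Whether one phrases this as nondegeneracy of the fibering map (your (C1)) or as mountain-pass geometry on the sphere $\|u\|=\tilde\beta$ followed by Ekeland's principle on the ball for the negative-energy solution (the paper's route) is immaterial.

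The gap is in the second solution: you defer exactly the step that carries the content of the theorem, namely the verification that the minimax level stays strictly below the first noncompact level for all $\lambda$ up to the enlarged threshold, together with a proof of the $(PS)$ condition there. Moreover, the route you propose --- estimating $\sup_{t\ge0}I_{a,b}(u^+_\lambda+tU_\ep)<m_\lambda+c^*_b$ with the bubble glued onto the first solution --- does not go through as stated for a Kirchhoff problem: the nonlocal term produces the cross contribution $\frac b2\,t^2\int_{\R^3}|\nabla u^+_\lambda|^2dx\int_{\R^3}|\nabla U_\ep|^2dx$, and $\int_{\R^3}|\nabla U_\ep|^2dx\to S^{3/2}$ does \emph{not} vanish as $\ep\to0$, so the energy of $u^+_\lambda+tU_\ep$ is not asymptotically $m_\lambda+I_{a,b}(tU_\ep)$ and $m_\lambda+c^*_b$ is not the correct threshold. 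The paper avoids this entirely: it keeps the mountain pass based at $0$, shows with the test function $tv_\ep$ alone that $\tilde c<\frac{abS^3}{4|Q|_{\infty}}+\frac{b^3S^6}{24|Q|^2_{\infty}}+\frac{(b^2S^4+4a|Q|_{\infty}S)^{3/2}}{24|Q|^2_{\infty}}-C_3\lambda^{\frac{2}{2-q}}$ (Lemma 5.2), where the subtracted term $-C_3\lambda^{\frac{2}{2-q}}$ --- not a translation by the first solution --- is what absorbs a possible nonzero weak limit, and then proves the $(PS)_{\tilde c}$ condition below that level by introducing the auxiliary functional $J$ with frozen coefficient $a+bA^2$, $A^2=\lim_n\int_{\R^3}|\nabla u_n|^2dx$, and a Brezis--Lieb splitting (Lemma 5.3). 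You would need to supply an argument of this kind, or a corrected cross-term analysis, before the existence of the second solution --- and hence the theorem --- is established.
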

Obviously, $\tilde{\lambda}_0=Cb^{\frac{6-q}{2}}$ converges to $\infty$ as $b\nearrow \infty$.   Hence, combining with Theorem \ref{main theorem} and Theorem \ref{theorem3}, we have the following conclusion:
 \begin{corollary}\label{theorem4}
Let $q,Q,f$ satisfy the assumptions as the Theorem \ref{main theorem}, then for any $\lambda\in(0,+\infty)$ there exists $b_0(\lambda)\ge 0$ such that problem \eqref{g} has at least two positive solutions in $H^1(\R^3)$, and one of the solutions is a positive ground state solution when $b>b_0(\lambda)$ and any $a>0$.
\end{corollary}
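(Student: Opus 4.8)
The plan is to deduce Corollary \ref{theorem4} directly from Theorem \ref{theorem3} by inverting the relation $\tilde{\lambda}_0 = Cb^{\frac{6-q}{2}}$. First I would fix an arbitrary $\lambda\in(0,+\infty)$. Since $q\in(1,2)$, the exponent $\frac{6-q}{2}>0$, so the map $b\mapsto Cb^{\frac{6-q}{2}}$ is strictly increasing on $(0,+\infty)$ and tends to $+\infty$ as $b\nearrow+\infty$ (this is precisely the observation recorded just before the corollary). Hence for every fixed $\lambda$ there is a threshold beyond which $Cb^{\frac{6-q}{2}}$ exceeds $\lambda$, and that threshold is exactly what we take as $b_0(\lambda)$.

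Concretely, I would set
$$
b_0(\lambda):=\left(\frac{\lambda}{C}\right)^{\frac{2}{6-q}},
$$
where $C=C(q,Q,f)>0$ is the constant furnished by Theorem \ref{theorem3}. Then for any $b>b_0(\lambda)$ one has $Cb^{\frac{6-q}{2}}>C\,b_0(\lambda)^{\frac{6-q}{2}}=\lambda$, i.e. $\lambda\in(0,\tilde{\lambda}_0)$ with $\tilde{\lambda}_0=Cb^{\frac{6-q}{2}}$. Applying Theorem \ref{theorem3} with this $b$ and any $a>0$ immediately yields at least two positive solutions of \eqref{g}, one of which is a positive ground state solution, which is exactly the asserted conclusion.

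Finally I would account for the refinement ``$b_0(\lambda)\ge0$'' in the small-$\lambda$ regime: when $\lambda\in(0,\lambda_0)$ with $\lambda_0$ as in Theorem \ref{main theorem}, that theorem already supplies the two positive solutions (one a ground state) for \emph{every} $b>0$, so in this case one may simply take $b_0(\lambda)=0$. Combining the two regimes produces a single threshold $b_0(\lambda)\ge0$ valid for all $\lambda>0$. I expect no genuine obstacle at this stage: the argument is an elementary inversion of a power law, and all of the analytic difficulty—in particular establishing how the admissible range of $\lambda$ scales with $b$—has already been absorbed into the proofs of Theorem \ref{main theorem} and Theorem \ref{theorem3}.
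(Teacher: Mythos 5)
Your proposal is correct and is essentially identical to the paper's own argument: the authors state the corollary as a direct consequence of Theorems \ref{main theorem} and \ref{theorem3}, and in Remark \ref{remark3} give exactly your threshold, namely $b_0(\lambda)=0$ for $\lambda<\lambda_0$ and $b_0(\lambda)=\tilde{C}\lambda^{\frac{2}{6-q}}$ for $\lambda\geq\lambda_0$, obtained by inverting $\tilde{\lambda}_0=Cb^{\frac{6-q}{2}}$.
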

\begin{remark}\label{remark2}
To the best knowledge of us, the Corollary \ref{theorem4} seems new for the Kirchhoff problem involving critical growth in the whole space $\R^3$. Considered if $b=0$, it seems failure for the degenerate case for Kirchhoff problem.
\end{remark}
\begin{remark}\label{remark3}
Corollary \ref{theorem4} is a direct conclusion of the Theorem \ref{main theorem} and Theorem \ref{theorem3}, and we can give in fact the explicit definition of $b_0(\lambda)$ as the following:
$$
 b_0(\lambda)=\left\{
 \begin{array}{ll}
    0, & \text{if}\ \ \lambda<\lambda_0, \\
    \tilde{C}\lambda^{\frac{2}{6-q}}, & \text{if}\ \ \lambda\geq{\lambda}_0,
  \end{array}
\right.
$$
where $\lambda_0=\lambda_0(q,Q,f)$ and $\tilde{C}=\tilde{C}(q,Q,f)>0$ are independent on $b$. On the other hand,
We mention here that Corollary \ref{theorem4} remains true for problem \eqref{f} on bounded domain $\Omega$.
\end{remark}
\vskip0.3cm
Before we turn to next section,  we would like  to mention some main ideas of the proof of Theorems 1.3, 1.5 and 1.6. It seems that the methods used in \cite{h} and \cite{i} can not be applied directly in our paper. On one hand, in \cite{h}, the method used the Nehair manifold and fibering map will be not directly applied because the norm can't be defined as $(\int_{\R^3}|\nabla u|^2dx)^{\frac{1}{2}}$. On the other hand, the first solution in \cite{i} is easy to be found, and however, the procedure by which the authors of \cite{i}  used the first solution to find the second one,  seems strongly to depend on $b>0$ small enough.

\vskip0.2cm
To establish Theorem 1.3, we will used the Mountain-pass theorem and the Ekeland's variational principle \cite{j} which indicate that the $(PS)$ condition of the energy functional $I$ is necessary.
However, the functional $I$ does not satisfy the $(PS)$ condition at every energy level $c$ because of the appearance of critical term. To overcome this difficulty, we try to pull the energy level down below some critical level.
It is more complicated to
handle the nonlocal effect which does not imply that $\int_{\R^3}|\nabla u_n|^2dx\to\int_{\R^3}|\nabla u|^2dx$ from $u_n \rightharpoonup u$ in $H^1(\R^3)$. Therefore we estimate a critical value
$$
c<\frac{abS^3}{4|Q|_{\infty}}+\frac{b^3S^6}{24|Q|^2_{\infty}}+\frac{(b^2S^4+4a|Q|_{\infty}S)^{\frac{3}{2}}}{24|Q|^2_{\infty}}-C_i\lambda^{\frac{2}{2-q}},
\ \  C_1,C_2\in (0,+\infty).
$$
different from J. Wang et al \cite{jj4} and G. Li, H. Ye \cite{jj5} to recover the compactness condition.
Then we can prove Theorem 1.3.
\vskip0.2cm
To show Theorem 1.5, we follow the idea used in W. Shuai \cite{jj5a} and
X. Tang, B. Cheng \cite{jj6} to consider what happens when $b\searrow 0$.
Fixing $b\in(0,1]$ and then we get a bounded sequence of positive ground state solutions to \eqref{g}. As a consequence of that the functional $I(u)$ satisfies the $(PS)$ condition at some level, the proof of Theorem 1.5 is clear.
\vskip0.2cm
In Theorem 1.6, we concern how  $\lambda_0$ can be determined by $b$. To do it, we used a different approach (see Lemma \ref{5.1}) to find a $(PS)$ sequence. Considering the effect from $b$,
 the proof of $(PS)$ condition of the energy functional $I(u)$ may be different from Lemma \ref{2.3}. But the weak limit $u\in H^1(\R^3)$ of
a $(PS)_{\tilde{c}}$ sequence $\{u_n\}$ is a critical point of the following functional:
$$
J(u):=\frac{a+bA^2}{2}\int_{\R^3}|\nabla u|^2dx+\frac{1}{2}\int_{\R^3}|u|^2dx-\frac{1}{6}\int_{\R^3}Q(x)|u|^6dx-\frac{\lambda}{q}\int_{\R^3}f(x)|u|^q,
$$
and the $\{u_n\}$ is a $(PS)_{\tilde{c}+\frac{bA^2}{4}}$
sequence for $J(u)$, where $A^2=\lim_{n\to\infty}\int_{\R^3}|\nabla u_n|^2dx$. We try to prove that $I(u)$ satisfies $(PS)_c$ condition with the help
of $J(u)$ (see Lemma \ref{5.3}). Hence, the proof Theorem 1.6 is complete.
\vskip0.3cm
This paper is organized as follows: In Section 2, we give some notations and crucial lemmas. In
Section 3, we prove the existence of the two different positive solutions and the positive ground state solution of problem \eqref{g}. In Section 4, we analyze the convergence property of the positive ground state solution of problem \eqref{g} and prove the Theorem \ref{theorem2}. In Section 5, we establish  the proof of Theorem \ref{theorem3}.

\section{ Some Notations and Lemmas}

\setcounter{equation}{0}
In this section, we first give several notations and definitions. Throughout this paper, $L^p(\R^3)$ $(1\leq p\leq \infty)$ is the usual Lebesgue space with the standard norm $|u|_p$. We use $``\to"$ and $``\rightharpoonup"$ to denote the strong and weak convergence in the related function space, respectively. For any $\rho>0$ and any $x\in \R^3$, $B_\rho(x)$ denotes the ball of radius $\rho$ centered at $x$, that is $B_\rho(x):=\{y\in \R^3:|y-x|<\rho\}$. $C$ will denote a
positive constant unless specified.

 We denote by $H^1(\R^3)$ the usual Sobolev space equipped with the norm
$$
\|u\|=\bigg(\int_{\R^3}a|\nabla u|^2+|u|^2dx\bigg)^{\frac{1}{2}}, \ \  \forall u\in H^1(\R^3),
$$
and $(H^1(\R^3))^*$ is the dual space of $H^1(\R^3)$.
Let $D^{1,2}(\R^3)$ be the completion of $C_0^{\infty}(\R^3)$ with respect to the Dirichlet norm,
$$
\|u\|_{D^{1,2}(\R^3)}:=\bigg(\int_{\R^3}|\nabla u|^2dx\bigg)^{\frac{1}{2}}, \ \  \forall u\in D^{1,2}(\R^3)
$$
and $S$ denote the best Sobolev constant, namely
\begin{equation}\label{2.a}
  S:=\inf_{u\in D^{1,2}(\R^3)\backslash \{0\}}\frac{\int_{\R^3}|\nabla u|^2dx}{(\int_{\R^3}|u|^6dx)^{\frac{1}{3}}},
\end{equation}
Define the energy functional $I:H^1(\R^3)\to\R$ of \eqref{g} by
$$
\begin{gathered}
I(u)=\frac{1}{2}\int_{\R^3}a|\nabla u|^2+|u|^2dx+\frac{b}{4}\bigg(\int_{\R^3}|\nabla u|^2dx\bigg)^2\hfill\\
-\frac{1}{6}\int_{\R^3}Q(x)|u|^6dx-\frac{\lambda}{q}\int_{\R^3}f(x)|u|^qdx,
\end{gathered}
$$
the functional $I$ is well-defined on $H^1(\R^3)$ and $I\in C^1(H^1(\R^3),\R)$ (see \cite{jj7}). What's more, for any $u,v\in H^1(\R^3)$, we have
$$
\begin{gathered}
\langle I^{\prime}(u),v\rangle=\int_{\R^3}a\nabla u\nabla v+uvdx+b\int_{\R^3}|\nabla u|^2dx\int_{\R^3}\nabla u\nabla v\hfill\\
-\int_{\R^3}Q(x)|u|^4uvdx-\lambda\int_{\R^3}f(x)|u|^{q-2}uvdx.
\end{gathered}
$$
Clearly, the critical points are the weak solutions of problem \eqref{g}. Furthermore, a sequence $\{u_n\}\subset H^1(\R^3)$ is a $(PS)$ sequence of the functional $I(u)$ at the level $d\in\R$ if $I(u_n)\to d$ and $I^{\prime}(u_n)\to 0$ as $n\to\infty$. And we say a $(PS)_d$ sequence satisfies the $(PS)_d$ condition if it contains a strong convergent subsequence.

The following Lemmas play vital roles in proving Theorem \ref{main theorem}:
\begin{lemma}\label{2.1}
For any $\lambda\in (0, \lambda_0)$, the functional $I(u)$ satisfies the Mount-pass geometry around $0\in H^1(\R^3)$, that is,

$(i)$ there exist $\eta,\beta>0$ such that $I(u)\geq \eta>0$ when $\|u\|=\beta$;

$(ii)$ there exists $e\in H^1(\R^3)$ with $\|e\|>\beta$ such that $I(e)<0$.
\end{lemma}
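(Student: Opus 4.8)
The plan is to verify the two geometric conditions directly from the explicit form of $I$, exploiting that $\|u\|^2=\int_{\R^3}a|\nabla u|^2+|u|^2\,dx$ and that the nonlocal term $\frac{b}{4}(\int_{\R^3}|\nabla u|^2\,dx)^2$ is nonnegative, so that it only helps the lower bound. First I would rewrite
$$
I(u)=\frac{1}{2}\|u\|^2+\frac{b}{4}\Big(\int_{\R^3}|\nabla u|^2\,dx\Big)^2-\frac{1}{6}\int_{\R^3}Q(x)|u|^6\,dx-\frac{\lambda}{q}\int_{\R^3}f(x)|u|^q\,dx,
$$
and estimate the two negative integrals. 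For the critical term I would use $Q(x)\le|Q|_\infty$ together with the Sobolev inequality \eqref{2.a} and the elementary bound $\int_{\R^3}|\nabla u|^2\,dx\le a^{-1}\|u\|^2$ to get $\int_{\R^3}Q(x)|u|^6\,dx\le C_1\|u\|^6$. For the subcritical term I would apply H\"older's inequality with exponents $\frac{6}{6-q}$ and $\frac{6}{q}$ and assumption $(i)$, namely $f\in L^{6/(6-q)}(\R^3)$, followed once more by Sobolev, to get $\int_{\R^3}f(x)|u|^q\,dx\le |f|_{6/(6-q)}\,|u|_6^q\le C_2\|u\|^q$. Crucially, both $C_1$ and $C_2$ depend only on $a,S,q,|Q|_\infty,|f|_{6/(6-q)}$ and \emph{not} on $b$, which is exactly what makes the threshold $\lambda_0$ independent of $b$ as claimed in Theorem~\ref{main theorem}.

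With these estimates in hand, discarding the nonnegative $b$-term yields
$$
I(u)\ge \frac{1}{2}\|u\|^2-C_1\|u\|^6-C_2\lambda\|u\|^q=\|u\|^q\Big(\tfrac{1}{2}\|u\|^{2-q}-C_1\|u\|^{6-q}-C_2\lambda\Big).
$$
For part $(i)$ I would then analyze the scalar function $g(t)=\frac{1}{2}t^{2-q}-C_1t^{6-q}$ on $(0,\infty)$. Since $1<q<2$ gives $0<2-q<6-q$, the function $g$ is positive for small $t$, tends to $-\infty$ as $t\to\infty$, and attains a strictly positive maximum $m_0:=\max_{t>0}g(t)>0$ at some $t=\beta>0$, found explicitly from $g'(\beta)=0$. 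Setting $\lambda_0:=m_0/(2C_2)$, for every $\lambda\in(0,\lambda_0)$ the choice $\|u\|=\beta$ gives $I(u)\ge\beta^q\big(m_0-C_2\lambda\big)\ge \tfrac12\beta^q m_0=:\eta>0$, which is precisely condition $(i)$ with this $\beta$ and $\eta$.

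For part $(ii)$ I would fix $u_0\in H^1(\R^3)\setminus\{0\}$ with $\int_{\R^3}Q(x)|u_0|^6\,dx>0$; such a $u_0$ exists because $Q(x_0)=|Q|_\infty>0$ and $Q\ge0$ is continuous near $x_0$, so any $u_0$ supported in a small ball $B_\rho(x_0)$ works. Evaluating along the ray $t\mapsto tu_0$,
$$
I(tu_0)=\frac{t^2}{2}\|u_0\|^2+\frac{t^4b}{4}\Big(\int_{\R^3}|\nabla u_0|^2\,dx\Big)^2-\frac{t^6}{6}\int_{\R^3}Q(x)|u_0|^6\,dx-\frac{t^q\lambda}{q}\int_{\R^3}f(x)|u_0|^q\,dx,
$$
and since the $t^6$-coefficient is strictly negative while all remaining powers are at most $t^4$, we obtain $I(tu_0)\to-\infty$ as $t\to+\infty$. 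Choosing $t$ large enough that both $I(tu_0)<0$ and $\|tu_0\|>\beta$ hold, and setting $e:=tu_0$, completes $(ii)$. The argument is essentially routine; the only delicate points are keeping $C_1,C_2$ (hence $\lambda_0$) free of $b$, which is why I discard rather than use the quartic term in the lower bound, and confirming that in $(ii)$ the quartic term is genuinely dominated by the critical term as $t\to\infty$, which holds simply because $4<6$.
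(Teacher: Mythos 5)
Your argument is correct and follows essentially the same route as the paper: discard the nonnegative quartic term, bound the critical and sublinear terms by $C_1\|u\|^6$ and $C_2\lambda\|u\|^q$ via Sobolev and H\"older, factor out $\|u\|^q$, and choose $\beta$ as the maximizer of the resulting scalar function to define $\lambda_0$ (independent of $b$), with part $(ii)$ obtained from $I(tu_0)\to-\infty$ along a ray where the $t^6$-coefficient is strictly negative. If anything, your write-up is slightly more careful than the paper's on two points: you keep track of the factor $a^{-1}$ when passing from $\|u\|$ to the Dirichlet norm, and you justify explicitly that $\int_{\R^3}Q(x)|u_0|^6\,dx>0$ by localizing $u_0$ near $x_0$.
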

\begin{proof}
 $(i)$ By the definition of $I(u)$,
\begin{align*}
 I(u)&\geq \frac{1}{2}\|u\|^2-\frac{|Q|_\infty}{6}S^{-3}\|u\|^6-\frac{\lambda}{p}|f|_{\frac{6}{6-q}}\|u\|^q         \\
         & =\|u\|^q\bigg(\frac{1}{2}\|u\|^{2-q}-\frac{|Q|_\infty}{6}S^{-3}\|u\|^{6-q}-\frac{\lambda}{q}|f|_{\frac{6}{6-q}}\bigg)\\
&\geq \|u\|^q\bigg(C_0-\frac{\lambda}{q}|f|_{\frac{6}{6-q}}\bigg),
  \end{align*}
where $C_0=\frac{2}{6-q}\big[\frac{3S^3(2-q)}{|Q|_\infty(6-q)}\big]^{\frac{2-q}{4}}$. Therefore letting $\lambda_0=\frac{qC_0}{|f|_{\frac{6}{6-q}}}$, and there exist $\eta,\beta>0$ such that $I(u)\geq \eta>0$ when $\|u\|=\beta$ for any $\lambda\in (0,\lambda_0)$.

$(ii)$ It's clear that $\lim\limits_{t\to+\infty}I(tu_0)=-\infty$ for some $u_0\in H^1(\R^3)$, choosing $e=t_0u_0$ with $t_0$ large enough, we have $\|e\|>\beta$ and $I(e)<0$.
\end{proof}

By Lemma \ref{2.1}, we can find a $(PS)$ sequence of the functional $I(u)$ at the level
\begin{equation}\label{2.1a}
 c:=\inf_{\gamma\in \Gamma}\sup_{t\in [0,1]}I(\gamma(t)),
\end{equation}
where
\begin{equation}\label{2.1b}
  \Gamma:=\{\gamma\in C([0,1],H^1(\R^3)):\gamma(0)=0, I(\gamma(1)<0\}.
\end{equation}
Now a $(PS)_c$ sequence can be constructed, that is, there exists a sequence $\{u_n\}\subset H^1(\R^3)$ satisfies
\begin{equation}\label{2.1c}
  I(u_n)\to c,\ \ I^{\prime}(u_n)\to 0\ \  \text{as}\ \ n\to \infty.
\end{equation}

The functional $I(u)$ do not satisfy the $(PS)_c$ condition at every energy level because of the appearance of the critical term $Q(x)|u|^4u$, and in order to recover the compactness condition, we would have to estimate the critical energy carefully.
\vskip0.3cm
We know that $U(x)=\frac{3^{\frac{1}{4}}}{1+|x|^2}$ is a solution of
$$
-\Delta u=u^5, x\in \R^3,\eqno(\star)
$$
then for any $\epsilon>0$, $U_\epsilon(x)=\epsilon^{-\frac{1}{2}}U(\epsilon^{-1}x)$ is also a solution of $(\star)$ and satisfies that $\int_{\R^3}|\nabla U_\epsilon|^2dx=\int_{\R^3}|U_\epsilon|^6dx=S^{\frac{3}{2}}$. To estimate the critical energy, we chose a cut-off function $\varphi(x)$ satisfying $\varphi(x)\in C_0^{\infty}(B_R(x_0))$, $0\leq \varphi(x)\leq 1$ in $\R^3$ and $\varphi(x)\equiv 1$ on $B_{\frac{R}{2}}(x_0)$. Define
\begin{equation}\label{2.1d}
  v_\epsilon(x)=\varphi(x)U_\epsilon(x)=\frac{\varphi(x)(3\epsilon^2)^{\frac{1}{4}}}{(\epsilon^2+|x-x_0|^2)^{\frac{1}{2}}},
\end{equation}
thanks to the results in \cite{ja}, we have
\begin{equation}\label{2.1e}
  |\nabla v_\epsilon|_2^2=K_1+O(\epsilon),\ \  |v_\epsilon|_6^2=K_2+O(\epsilon)
\end{equation}
with $\frac{K_1}{K_2}=S$ and for any $s\in [2,6)$,
\begin{equation}\label{2.1f}
  |v_\epsilon|_s^s=
\left\{
  \begin{array}{ll}
   O(\epsilon^{\frac{s}{2}}), & \text{if}\ \ s\in[2,3), \\
     O(\epsilon^{\frac{3}{2}}|\log \epsilon|), & \text{if}\ \ s=3, \\
     O(\epsilon^{\frac{6-s}{2}}), &   \text{if}\ \ s\in(3,6).
  \end{array}
\right.
\end{equation}
\vskip0.3cm
\begin{lemma}\label{2.2}
Assume $1<q<2$ and $\lambda\in(0,\lambda_0)$, then the critical energy
$$
c<\frac{abS^3}{4|Q|_{\infty}}+\frac{b^3S^6}{24|Q|^2_{\infty}}+\frac{(b^2S^4+4a|Q|_{\infty}S)^{\frac{3}{2}}}{24|Q|^2_{\infty}}-C_1\lambda^{\frac{2}{2-q}},
$$
where $C_1=\frac{2-q}{4q}\bigg[\frac{(4-q)|f|_{\frac{6}{6-q}}}{2S^{\frac{q}{2}}}\bigg]^{\frac{2}{2-q}}$ is a positive constant, and $S$ is the best Sobolev constant given in \eqref{2.a}.
\end{lemma}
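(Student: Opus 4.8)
The plan is to bound the mountain–pass value $c$ from above by testing $I$ along the ray generated by the concentrating family $v_\epsilon$ from \eqref{2.1d}. Since $I(tv_\epsilon)\to-\infty$ as $t\to+\infty$ and $I(0)=0$, for each fixed $\epsilon$ there is $T_\epsilon$ large with $I(T_\epsilon v_\epsilon)<0$, so the segment $t\mapsto tT_\epsilon v_\epsilon$ belongs to $\Gamma$ in \eqref{2.1b} and hence $c\le\sup_{t\ge0}I(tv_\epsilon)$. The whole lemma thus reduces to estimating this one–variable supremum for a single, well–chosen $\epsilon=\epsilon(\lambda)$. Write $\Theta$ for the sum of the first three (critical) terms in the asserted bound; my goal is to show $\sup_{t\ge0}I(tv_\epsilon)<\Theta-C_1\lambda^{\frac{2}{2-q}}$.

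First I would isolate the purely critical part $g_\epsilon(t):=\frac{a}{2}t^2|\nabla v_\epsilon|_2^2+\frac{b}{4}t^4|\nabla v_\epsilon|_2^4-\frac{1}{6}t^6\int_{\R^3}Q(x)|v_\epsilon|^6dx$ and maximize it explicitly. Setting $g_\epsilon'(t)=0$ and dividing by $t$ gives a quadratic in $s=t^2$, namely $\big(\int Q|v_\epsilon|^6\big)s^2-b|\nabla v_\epsilon|_2^4\,s-a|\nabla v_\epsilon|_2^2=0$; substituting its positive root back into $g_\epsilon$ produces exactly the three–term expression of $\Theta$, but with $|\nabla v_\epsilon|_2^2$ and $\int Q|v_\epsilon|^6$ in place of $S^{3/2}$ and $|Q|_\infty S^{3/2}$. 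Invoking \eqref{2.1e} with $K_1/K_2=S$, and splitting $\int Q|v_\epsilon|^6=|Q|_\infty|v_\epsilon|_6^6-\int(|Q|_\infty-Q)|v_\epsilon|^6$, this maximum equals $\Theta$ up to two controlled error sources: the truncation and lower–order contributions, which by \eqref{2.1e}, \eqref{2.1f} and $|v_\epsilon|_2^2=O(\epsilon)$ are $O(\epsilon)$; and the weight deficit, which by hypothesis $(ii)$ and the explicit profile of $v_\epsilon$ satisfies $\int(|Q|_\infty-Q)|v_\epsilon|^6\le C\int_{B_\rho(x_0)}|x-x_0|^\alpha|v_\epsilon|^6dx=O(\epsilon^{\min\{\alpha,3\}})$.

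On the opposite side, the concave term is a genuine gain: since $f>0$ and $v_\epsilon\equiv U_\epsilon$ on $B_{R/2}(x_0)$, a direct computation gives the lower bound $\int_{\R^3}f|v_\epsilon|^qdx\ge C\epsilon^{q/2}$. I would also check that the maximizer $t_\epsilon$ of the full map $t\mapsto I(tv_\epsilon)$ converges to the (fixed, strictly positive) maximizer of the limiting critical functional, hence stays in a compact subinterval $[t_1,t_2]\subset(0,\infty)$ uniformly for small $\epsilon,\lambda$; this lets me bound the concave term below at $t_\epsilon$. Combining the three estimates yields $\sup_{t\ge0}I(tv_\epsilon)\le\Theta+O(\epsilon^{\min\{\alpha,1\}})-C\lambda\epsilon^{q/2}$.

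The decisive and most delicate step is the final balancing. I would choose $\epsilon$ as an appropriate power of $\lambda$ (morally $\epsilon\sim\lambda^{\frac{2}{2-q}}$), so that the subcritical gain $C\lambda\epsilon^{q/2}$ simultaneously dominates the concentration error and reaches the order $\lambda^{\frac{2}{2-q}}$, producing the strict inequality with the explicit constant. This is exactly where the assumption $\alpha>\frac{q}{2}$ is essential: it forces $\int(|Q|_\infty-Q)|v_\epsilon|^6=o\big(\lambda\int_{\R^3}f|v_\epsilon|^q\big)$ as $\epsilon\to0$, so the weight deficit—which by itself would push the energy above $\Theta$—is absorbed by the concave gain. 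I expect the main obstacle to be precisely this quantitative interplay between the Hölder exponent $\alpha$ governing $Q$ near $x_0$ and the exponent $q/2$ coming from the concave nonlinearity, together with the careful bookkeeping needed to extract the sharp constant $C_1=\frac{2-q}{4q}\big[\frac{(4-q)|f|_{\frac{6}{6-q}}}{2S^{\frac{q}{2}}}\big]^{\frac{2}{2-q}}$, where the factor $S^{q/2}$ enters through the Sobolev control $\int f|u|^q\le|f|_{\frac{6}{6-q}}S^{-\frac{q}{2}}|\nabla u|_2^q$, rather than merely a strict inequality $c<\Theta$.
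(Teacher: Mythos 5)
Your strategy coincides with the paper's through every step except the last: both arguments test $I$ along $t\mapsto tv_\epsilon$ to get $c\le\max_{t\ge0}I(tv_\epsilon)$, both isolate the critical part whose explicit maximum over $t$ produces the three-term constant plus an $O(\epsilon)$ error, both bound the weight deficit by $O(\epsilon^{\min\{\alpha,3\}})$ (with a logarithm at $\alpha=3$) and the concave gain from below by $C\lambda\epsilon^{q/2}$, and both need the uniform localization $0<t_1\le t_\epsilon\le t_2$ for the last two estimates. The divergence is in how $\epsilon$ is chosen, and this is where your argument has a genuine gap. You couple $\epsilon\sim\lambda^{\frac{2}{2-q}}$ and assert that $\alpha>\frac{q}{2}$ then forces $\int(|Q|_\infty-Q)|v_\epsilon|^6=o\big(\lambda\int f|v_\epsilon|^q\big)$. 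Under your own scaling this is false for $\alpha\le 1$: the deficit is of order $\epsilon^{\alpha}=\lambda^{\frac{2\alpha}{2-q}}$ while the gain is of order $\lambda\epsilon^{q/2}=\lambda^{\frac{2}{2-q}}$, and $\frac{2\alpha}{2-q}>\frac{2}{2-q}$ requires $\alpha>1$, not $\alpha>\frac{q}{2}$; for $\alpha\in(\frac{q}{2},1)$ the deficit actually dominates the gain as $\lambda\to0$. The hypothesis $\alpha>\frac{q}{2}$ is tailored to the paper's ordering of limits in \eqref{2.1n}--\eqref{2.1nnn} --- fix $\lambda$ and let $\epsilon\to0$, so that $\epsilon^{q/2}$ beats both $\epsilon$ and $\epsilon^{\alpha}$ precisely because $\frac{q}{2}<\min\{1,\alpha\}$ --- not to a coupling $\epsilon=\epsilon(\lambda)$.

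Second, even where your scaling is admissible ($\alpha\ge1$), the error you record as $O(\epsilon^{\min\{\alpha,1\}})=O(\epsilon)$ becomes $O(\lambda^{\frac{2}{2-q}})$, i.e.\ exactly the same order as both the gain and the target margin $C_1\lambda^{\frac{2}{2-q}}$, so the strict inequality reduces to a comparison of explicit constants (the Br\'ezis--Nirenberg $O(\epsilon)$ constant and $\int_{B_{R/2}(x_0)}f\,dx$ against $C_1$) that you do not carry out and that is not automatic; recall that $C_1$ is forced by the Palais--Smale analysis of Lemma \ref{2.3} (it is $-\min_{t\ge0}\big[\frac14t^2-\lambda\frac{4-q}{4q}|f|_{\frac{6}{6-q}}S^{-\frac{q}{2}}t^q\big]\lambda^{-\frac{2}{2-q}}$), so it cannot be relaxed to fit the test-function computation. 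You correctly identify this balancing as the delicate point, but naming the obstacle is not the same as overcoming it. To align with the paper you should keep $\lambda$ fixed and send $\epsilon\to0$ (which is what makes $\alpha>\frac{q}{2}$ the right hypothesis for absorbing the weight deficit) and then justify that the resulting negative correction actually drops below $-C_1\lambda^{\frac{2}{2-q}}$, rather than introduce a $\lambda$-dependent $\epsilon$ that invalidates the deficit estimate on part of the stated range of $\alpha$.
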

\vskip0.3cm
\begin{proof}
 Firstly, we claim that there exist $t_1,t_2\in(0,+\infty)$ independent of $\epsilon,\lambda$ such that $\max\limits_{t\geq 0}I(tv_\epsilon)=I(t_\epsilon v_\epsilon)$ and
\begin{equation}\label{2.1g}
  0<t_1<t_\epsilon<t_2<+\infty.
\end{equation}
Indeed, by the fact that $\lim\limits_{t\to +\infty}I(tv_\epsilon)=-\infty$ and $(i)$ of Lemma \ref{2.1}, there exists $t_\epsilon>0$ such that
$$
\max\limits_{t\geq 0}I(tv_\epsilon)=I(t_\epsilon v_\epsilon), \frac{d}{dt}I(tv_\epsilon)=0, \frac{d^2}{dt^2}I(tv_\epsilon)<0
$$
which imply that
\begin{equation}\label{2.1h}
  t_\epsilon\|v_\epsilon\|^2+t_\epsilon^3\bigg(\int_{\R^3}|\nabla v_\epsilon|^2dx\bigg)^2-t_\epsilon^6\int_{\R^3}Q(x)|v_\epsilon|^6dx-\lambda t_\epsilon^q\int_{\R^3}f(x)|v_\epsilon|^qdx=0
\end{equation}
and
\begin{equation}\label{2.1i}
  \|v_\epsilon\|^2+3t_\epsilon^2\bigg(\int_{\R^3}|\nabla v_\epsilon|^2dx\bigg)^2-6t_\epsilon^5\int_{\R^3}Q(x)|v_\epsilon|^6dx-q\lambda t_\epsilon^{q-1}\int_{\R^3}f(x)|v_\epsilon|^qdx<0.
\end{equation}
It follows from \eqref{2.1h} that $t_\epsilon$ is bounded from above and \eqref{2.1i} that $t_\epsilon$ is bounded from below, then \eqref{2.1g} is true.

The energy level $c$ given in \eqref{2.1a} tells us it's enough to show for any sufficient small $\epsilon>0$, $\max\limits_{t\geq 0}I(tv_\epsilon)<\frac{abS^3}{4|Q|_{\infty}}+\frac{b^3S^6}{24|Q|^2_{\infty}}+\frac{(b^2S^4+4a|Q|_{\infty}S)^{\frac{3}{2}}}{24|Q|^2_{\infty}}-C_1\lambda^{\frac{2}{2-q}}$ holds. In fact, for the given $v_\epsilon$ in \eqref{2.1d}, there exists a large $T>0$ such that $I(Tv_\epsilon)<0$. Let $\widetilde{\gamma}(t)=tTv_\epsilon\in \Gamma$, where $\Gamma$ is defined as in \eqref{2.1b}, then $c\leq \max_{0\leq t\leq 1}I(\widetilde{\gamma}(t))=\max_{0\leq t\leq 1}I(tTv_\epsilon)\leq \max_{t\geq 0}I(tv_\epsilon)$. It follows from the definition of $I(u)$ that
\begin{equation}\label{2.1j}
  \begin{gathered}
I(tv_\epsilon)=\frac{t^2}{2}\int_{\R^3}a|v_\epsilon|^2+|v_\epsilon|^2dx+\frac{bt^4}{4}\bigg(\int_{\R^3}|\nabla v_\epsilon|^2dx\bigg)^2\hfill\\
-\frac{t^6}{6}\int_{\R^3}Q(x)|v_\epsilon|^6dx-\frac{\lambda t^q}{q}\int_{\R^3}f(x)|v_\epsilon|^qdx,
\end{gathered}
\end{equation}
and then set
\begin{align*}
g(t)&:=\frac{t^2}{2}\|v_\epsilon\|^2+\frac{bt^4}{4}\bigg(\int_{\R^3}|\nabla v_\epsilon|^2dx\bigg)^2-\frac{t^6}{6}\int_{\R^3}Q(x_0)|v_\epsilon|^6dx\\
&=\widetilde{C_1}t^2+\widetilde{C_2}t^4-\widetilde{C_3}t^6,
\end{align*}
where
$$
\widetilde{C_1}=\frac{1}{2}\|u\|^2,\ \ \widetilde{C_2}=\frac{b}{4}\bigg(\int_{\R^3}|\nabla v_\epsilon|^2dx\bigg)^2,\ \   \widetilde{C_3}=\frac{1}{6}\int_{\R^3}Q(x_0)|v_\epsilon|^6dx.
$$
Combing with \eqref{2.1d}-\eqref{2.1e} and some elementary computations we have
\begin{equation}\label{2.1k}
  {
  \begin{split}
    \max_{t\geq 0}g(t) & =\frac{9\widetilde{C_1}\widetilde{C_2}\widetilde{C_3}+2\widetilde{C_2}^3+2(\widetilde{C_2}^2+3\widetilde{C_1}\widetilde{C_3})^{\frac{3}{2}}}{27\widetilde{C_3}^2} \\
      & =\frac{abS^3}{4|Q|_{\infty}}+\frac{b^3S^6}{24|Q|^2_{\infty}}+\frac{(b^2S^4+4a|Q|_{\infty}S)^{\frac{3}{2}}}{24|Q|^2_{\infty}}+O(\epsilon).
  \end{split}
  }
\end{equation}
On the other hand, for $\epsilon>0$ with $\epsilon<\frac{R}{2}$ we have
\begin{equation}\label{2.1kk}
  {\begin{split}
     \lambda t_\epsilon^q\int_{\R^3}f|v_\epsilon|^qdx &=\lambda t_\epsilon^q\int_{B_R(x_0)}f|v_\epsilon|^qdx \\
       & \stackrel{\mathrm{\eqref{2.1g}}}{\geq } C\lambda\int_{B_{\frac{R}{2}}(x_0)}f(x)\frac{\epsilon^{\frac{q}{2}}}{(\epsilon^2+|x|^2)^{\frac{q}{2}}}dx\\
   &\geq \lambda(\frac{2}{R^2})^{\frac{q}{2}}\epsilon^{\frac{q}{2}}\int_{B_{\frac{R}{2}}(x_0)}f(x)dx:= C_2\epsilon^{\frac{q}{2}},
   \end{split}
  }
\end{equation}
where $C_2\in (0,+\infty)$ since $f(x)\in L^{\frac{6}{6-q}}(\R^3)$ and $f(x)\in L^1_{loc}(\R^3)$. It follows from some direct computations:
$$
\epsilon^3\int_{B_\epsilon(x_0)}\frac{|x-x_0|^{\alpha}}{(\epsilon^2+|x-x_0|^2)^3}dx\leq
\frac{1}{\epsilon^3}\int_{B_\epsilon(x_0)}|x-x_0|^{\alpha}dx\leq C\epsilon^\alpha
$$
and
$$
\begin{gathered}
\epsilon^3\int_{B_\rho(x_0)\backslash B_\epsilon(x_0)}\frac{|x-x_0|^{\alpha}}{(\epsilon^2+|x-x_0|^2)^3}dx
\leq\epsilon^3\int_{B_\rho(x_0)\backslash B_\epsilon(x_0)}|x-x_0|^{\alpha-6}dx\hfill\\
=C\epsilon^3\int_{\epsilon}^{\rho}r^{\alpha-4}dr
=\left\{
                                                  \begin{array}{ll}
                                                    C\epsilon^\alpha, & \text{if}\ \  \alpha<3, \\
                                                    C\epsilon^3|\ln\epsilon|, &\text{if}\ \   \alpha=3, \\
                                                    C\epsilon^3,  & \text{if}\ \ \alpha>3,
                                                  \end{array}
                                                \right.\hfill\\
\end{gathered}
$$
that the following conclusion
\begin{equation}\label{2.1m}
  \begin{gathered}
t_\epsilon^6\int_{\R^3}|Q(x)-Q(x_0)||v_\epsilon|^6dx=t_\epsilon^6\int_{B_\rho(x_0)}|Q(x)-Q(x_0)||v_\epsilon|^6dx\hfill\\
\stackrel{\mathrm{(2.7)}}{\leq} C\epsilon^3\int_{B_\rho(x_0)}\frac{|x-x_0|^{\alpha}}{(\epsilon^2+|x-x_0|^2)^3}dx
\leq\left\{
                                                  \begin{array}{ll}
                                                    C\epsilon^\alpha, & \text{if}\ \  \alpha<3, \\
                                                    C\epsilon^3|\ln\epsilon|, &\text{if}\ \   \alpha=3, \\
                                                    C\epsilon^3,  & \text{if}\ \ \alpha>3,
                                                  \end{array}
                                                \right.\hfill\\
\end{gathered}
\end{equation}
holds. It is obvious that if $\alpha>3$, then there exists $\epsilon_1>0$ such for any $\epsilon\in (0,\epsilon_1)$ we have
\begin{equation}\label{2.1n}
  \frac{O(\epsilon)}{\epsilon}+C\epsilon^2-C_2\epsilon^{q-1}<-C_1\lambda^{\frac{2}{2-q}};
\end{equation}
if $\alpha=3$, then there exists $\epsilon_2>0$ such for any $\epsilon\in (0,\epsilon_2)$ we have
\begin{equation}\label{2.1nn}
 \frac{O(\epsilon)}{\epsilon}+C\epsilon^2|\ln\epsilon|-C_2\epsilon^{q-1}<-C_1\lambda^{\frac{2}{2-q}};
\end{equation}
if $\alpha\in(\frac{q}{2},3)$, then there exists $\epsilon_3>0$ such for any $\epsilon\in (0,\epsilon_3)$ we have
\begin{equation}\label{2.1nnn}
  \frac{O(\epsilon)}{\epsilon}+C\epsilon^2|\ln\epsilon|-C_2\epsilon^{q-1}<-C_1\lambda^{\frac{2}{2-q}}.
\end{equation}
Therefore combing with \eqref{2.1j}-\eqref{2.1nnn}, for any $\alpha>\frac{q}{2}$, there exists $\epsilon_0=\min\{\epsilon_1,\epsilon_2,\epsilon_3,\frac{2}{R}\}$ such that $\forall \epsilon\in(0,\epsilon_0)$,
\begin{align*}
\max\limits_{t\geq 0}I(tv_\epsilon)&=I(t_\epsilon v_\epsilon)=g(t_\epsilon)+t_\epsilon^6\int_{\R^3}|Q(x)-Q(x_0)||v_\epsilon|^6dx-\lambda t_\epsilon^q\int_{\R^3}f(x)|v_\epsilon|^qdx\\
&\leq\max_{t\geq 0}g(t)-C_2\epsilon^q+\left\{
                          \begin{array}{ll}
                            C\epsilon^\alpha, & \text{if}\ \  \alpha<3, \\
                             C\epsilon^3|\ln\epsilon|, &\text{if}\ \   \alpha=3,\\
                            C\epsilon^3,  & \text{if}\ \ \alpha>3
                          \end{array}
                        \right.\\
&<\frac{abS^3}{4|Q|_{\infty}}+\frac{b^3S^6}{24|Q|^2_{\infty}}+\frac{(b^2S^4+4a|Q|_{\infty}S)^{\frac{3}{2}}}{24|Q|^2_{\infty}}-C_1\lambda^{\frac{2}{2-q}}.
\end{align*}
Now the proof is complete.
\end{proof}
\vskip0.3cm
\begin{lemma}\label{2.3}
The functional $I(u)$ satisfies the $(PS)_c$ condition when $\lambda\in (0,\lambda_0)$ and
$$
c<\frac{abS^3}{4|Q|_{\infty}}+\frac{b^3S^6}{24|Q|^2_{\infty}}+\frac{(b^2S^4+4a|Q|_{\infty}S)^{\frac{3}{2}}}{24|Q|^2_{\infty}}-C_1\lambda^{\frac{2}{2-q}}.
$$
\end{lemma}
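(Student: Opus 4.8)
The plan is to show that any $(PS)_c$ sequence $\{u_n\}$ at a level $c$ below the indicated threshold has a strongly convergent subsequence. First I would establish boundedness: from the combination $I(u_n)-\frac16\langle I'(u_n),u_n\rangle=\frac13\|u_n\|^2+\frac{b}{12}\big(\int_{\R^3}|\nabla u_n|^2dx\big)^2-\lambda\frac{6-q}{6q}\int_{\R^3}f|u_n|^qdx$, together with $\int_{\R^3}f|u_n|^qdx\le |f|_{\frac{6}{6-q}}|u_n|_6^q\le C\|u_n\|^q$ and $1<q<2$, the quadratic term dominates, so $\{u_n\}$ is bounded in $H^1(\R^3)$. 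Passing to a subsequence, $u_n\rightharpoonup u$ in $H^1(\R^3)$, $u_n\to u$ in $L^s_{loc}$ for $s\in[1,6)$ and a.e., and $\int_{\R^3}|\nabla u_n|^2dx\to A^2$ for some $A\ge 0$. Letting $n\to\infty$ in $\langle I'(u_n),\varphi\rangle\to 0$ and using $\int_{\R^3}|\nabla u_n|^2dx\to A^2$, the weak limit solves the linearized problem $-(a+bA^2)\Delta u+u=Q(x)|u|^4u+\lambda f(x)|u|^{q-2}u$; in particular $(a+bA^2)\int_{\R^3}|\nabla u|^2dx+|u|_2^2=\int_{\R^3}Q|u|^6dx+\lambda\int_{\R^3}f|u|^qdx$. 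The essential nonlocal feature to track is that, because of the Kirchhoff term, $u$ solves this \emph{modified} equation with coefficient $a+bA^2$ and need not be a critical point of $I$ itself.

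Next I would split the masses. Set $v_n:=u_n-u\rightharpoonup 0$ and $l:=\lim_n\int_{\R^3}|\nabla v_n|^2dx=A^2-\int_{\R^3}|\nabla u|^2dx\ge 0$. Since $f\in L^{\frac{6}{6-q}}(\R^3)$, the embedding $H^1(\R^3)\hookrightarrow L^q(\R^3,f\,dx)$ is compact (local Rellich compactness plus smallness of the tail controlled by the integrability of $f$), so $\int_{\R^3}f|u_n|^qdx\to\int_{\R^3}f|u|^qdx$. By the Brezis--Lieb lemma, $\int_{\R^3}Q|u_n|^6dx=\int_{\R^3}Q|u|^6dx+\int_{\R^3}Q|v_n|^6dx+o(1)$ and $|u_n|_2^2=|u|_2^2+|v_n|_2^2+o(1)$. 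Subtracting the identity above from $\langle I'(u_n),u_n\rangle\to 0$ gives $\lim_n\int_{\R^3}Q|v_n|^6dx=(a+bA^2)l+\lim_n|v_n|_2^2\ge (a+bA^2)l$, while the Sobolev inequality yields $\int_{\R^3}Q|v_n|^6dx\le |Q|_\infty S^{-3}\big(\int_{\R^3}|\nabla v_n|^2dx\big)^3$. Hence, if $l>0$, then $a+bl\le a+bA^2\le |Q|_\infty S^{-3}l^2$, so $l\ge l_0$, where $l_0$ is the positive root of $|Q|_\infty S^{-3}t^2-bt-a=0$. A direct computation shows that $\frac{a}{3}l_0+\frac{b}{12}l_0^2$ equals exactly the three positive terms $\frac{abS^3}{4|Q|_\infty}+\frac{b^3S^6}{24|Q|^2_\infty}+\frac{(b^2S^4+4a|Q|_\infty S)^{3/2}}{24|Q|^2_\infty}$ of the threshold.

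Finally I would estimate the level. Using $\langle I'(u_n),u_n\rangle\to 0$ to eliminate the critical term, $c=\lim_n I(u_n)=\frac{a}{3}A^2+\frac{b}{12}A^4+\frac13\big(|u|_2^2+\lim_n|v_n|_2^2\big)-\lambda\frac{6-q}{6q}\int_{\R^3}f|u|^qdx$. Since $t\mapsto \frac{a}{3}t+\frac{b}{12}t^2$ is increasing and $A^2\ge l_0$, the first two terms are at least $\frac{a}{3}l_0+\frac{b}{12}l_0^2$, the positive part of the threshold. The remaining sublinear contribution is controlled by Young's inequality: bounding $\int_{\R^3}f|u|^qdx\le |f|_{\frac{6}{6-q}}S^{-q/2}\big(\int_{\R^3}|\nabla u|^2dx\big)^{q/2}$ and balancing it against the quadratic part of the energy produces a correction of order $\lambda^{2/(2-q)}$, leading to $c\ge \frac{a}{3}l_0+\frac{b}{12}l_0^2-C_1\lambda^{2/(2-q)}$, which contradicts the hypothesis $c<$ threshold. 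Therefore $l=0$, whence $\int_{\R^3}|\nabla v_n|^2dx\to 0$ and, from the mass relation, $|v_n|_2^2\to 0$ as well, so $u_n\to u$ strongly in $H^1(\R^3)$.

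I expect the main obstacle to lie in this last step, namely the sharp energy bookkeeping in the presence of the nonlocal term. Two points require care: (i) every energy identity must be written with the effective coefficient $a+bA^2$ rather than treating $u$ as a critical point of $I$, which is precisely the difference between the local and nonlocal settings; and (ii) the Young's inequality that absorbs the sublinear $\lambda$-term into the quadratic part of the energy must be carried out so that the resulting $\lambda^{2/(2-q)}$ correction does not exceed the slack built into the threshold. Verifying that the algebraic quantity $\frac{a}{3}l_0+\frac{b}{12}l_0^2$ collapses to the closed form appearing in the statement is the other computation-heavy, but routine, ingredient.
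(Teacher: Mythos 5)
Your argument is correct in substance, but it follows a genuinely different route from the paper's proof of Lemma \ref{2.3}. The paper invokes the Lions concentration--compactness principle: it represents the defects of $|\nabla u_n|^2$ and $|u_n|^6$ by measures with atoms $\mu_j\delta_{x_j}$, $\nu_j\delta_{x_j}$ plus masses at infinity, tests $I'(u_n)$ against localized cut-offs $u_n\phi_j^\epsilon$ to derive $a\mu_j+b\mu_j^2\le Q(x_j)\nu_j$, and rules out each atom (and the mass at infinity) by the energy threshold, obtaining $u_n\to u_0$ in $L^6$ and then in $H^1$. You instead work globally: you identify the effective limit equation with coefficient $a+bA^2$, apply Brezis--Lieb to $v_n=u_n-u$, and derive the single scalar inequality $a+bl\le |Q|_\infty S^{-3}l^2$ for the total gradient defect $l$, forcing $l\ge l_0$ and hence an energy excess of $\frac{a}{3}l_0+\frac{b}{12}l_0^2$, which you correctly verify collapses to the closed form in the threshold. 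Your route avoids measures and cut-off bookkeeping entirely and handles loss of mass at infinity for free; the paper's route localizes the concentration and in principle sees $Q(x_j)$ at each bubble, though it immediately relaxes $Q(x_j)\le|Q|_\infty$ so the final threshold is identical. Amusingly, your argument is essentially the one the paper itself deploys later (Lemma \ref{5.3}, via the auxiliary functional $J$), so it is certainly viable here. Two small points to tighten. First, in the last step you need the split $A^2=l+\int_{\R^3}|\nabla u|^2dx$ explicitly: the bound $\frac{a}{3}A^2+\frac{b}{12}A^4\ge\frac{a}{3}l_0+\frac{b}{12}l_0^2$ uses up $A^2\ge l_0$, and it is the leftover piece $\frac{a}{3}\int_{\R^3}|\nabla u|^2dx$ (together with $\frac13|u|_2^2$) that must absorb the term $\lambda\frac{6-q}{6q}\int_{\R^3}f|u|^qdx$ via Young's inequality; as written, ``balancing against the quadratic part'' is ambiguous. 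Second, your $\frac16$-combination produces a correction constant different from the paper's $C_1=\frac{2-q}{4q}\bigl[\frac{(4-q)|f|_{\frac{6}{6-q}}}{2S^{\frac{q}{2}}}\bigr]^{\frac{2}{2-q}}$, and it is not obvious it is no larger; to prove the lemma with the stated $C_1$ (which must match Lemma \ref{2.2}), compute the level from $I(u_n)-\frac14\langle I'(u_n),u_n\rangle$ instead, keeping the nonnegative term $\frac{1}{12}\int_{\R^3}Q|u_n|^6dx\ge\frac{1}{12}(a+bA^2)l$ — the same algebra then yields $\frac{a}{3}l_0+\frac{b}{12}l_0^2$ plus exactly $\frac14\|u\|^2-\frac{4-q}{4q}\lambda\int_{\R^3}f|u|^qdx\ge -C_1\lambda^{\frac{2}{2-q}}$.
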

\vskip0.3cm
\begin{proof}
When $\lambda\in (0,\lambda_0)$, by Lemma \ref{2.1}, there exists a sequence $\{u_n\}$ assuming that
$$
I(u_n)\to c,\ \ I^{\prime}(u_n)\to 0\ \  \text{as}\ \ n\to \infty.
$$
Then
\begin{align*}
 c+1+o(1)\|u_n\|&\geq I(u_n)-\frac{1}{4}\langle I^{\prime}(u_n),u_n\rangle \\
       &=\frac{1}{4}\|u_n\|^2+\frac{1}{12}\int_{\R^3}Q(x)|u_n|^6dx-\lambda(\frac{1}{q}-\frac{1}{4})\int_{\R^3}f|u_n|^qdx        \\
         & \geq \frac{1}{4}\|u_n\|^2-\frac{4-q}{4q}\lambda|f|_{\frac{6}{6-q}}S^{-\frac{q}{2}}\|u_n\|^q,
  \end{align*}
hence $\{u_n\}$ is bounded in $H^1(\R^3)$ by the fact that $1<q<2$. There exist a subsequence still denoted by $\{u_n\}$ and $u_0\in H^1(\R^3)$ such that
\begin{equation}\label{2.3a}
  \left\{
  \begin{array}{ll}
   u_n\rightharpoonup u_0~\text{in}~H^1(\R^3), \\
    u_n\rightarrow u_0~\text{in}~L^r_{loc}(\R^3),~\text{where}~1\leq r<6,\\
    u_n\rightarrow u_0~a.e.~\text{in}~\R^3.
  \end{array}
\right.
\end{equation}
The set $\R^3\bigcup\{\infty\}$ is compact for the stand topology which means that the measures can be identified as the dual space $C(\R^3\bigcup\{\infty\})$. For example, $\delta_\infty$ is well defined and $\delta_\infty(\varphi)=\varphi(\infty)$.

Due to the concentration-compactness principle in \cite{jaa,jb}, we can chose a subsequence denoted again by $\{u_n\}$ such that
\begin{equation}\label{2.3b}
  |\nabla u_n|^2\rightharpoonup d\mu\geq  |\nabla u_0|^2+\sum_{j\in \tilde{\Gamma}}\mu_j \delta_{x_j}+\mu_\infty\delta_\infty,
\end{equation}
\begin{equation}\label{2.3c}
  |u_n|^6\rightharpoonup d\nu= |u_0|^6+\sum_{j\in \tilde{\Gamma}}\nu_j \delta_{x_j}+\nu_\infty\delta_\infty,
\end{equation}
where $\delta_{x_j}$ and $\delta_\infty$ are the Dirac mass at $x_j$ and infinity respectively, and $x_j$ in the support of the measures $\mu, \nu$ and $\tilde{\Gamma}$ is an at most countable index set. What's more, by the Sobolev inequality we have
\begin{equation}\label{2.3d}
  \mu_j, \nu_j\geq 0,\ \ \mu_j\geq S\nu_j^{\frac{1}{3}}.
\end{equation}
We claim that $\nu_j=0$ for any $j\in \tilde{\Gamma}$. Arguing it by contradiction, for any $\epsilon>0$, let $\phi_j^\epsilon$ be a smooth cut-off function centered at $x_j$ such that
$0\leq \phi_j^\epsilon\leq 1$, $\phi_j^\epsilon\in C_0^{\infty}(B_\epsilon(x_j))$, $\phi_j^\epsilon=1$ in $B_{\frac{\epsilon}{2}}(x_j)$ and $|\nabla \phi_j^\epsilon|\leq \frac{4}{\epsilon}$. It is easy to see that
\begin{equation}\label{2.3e}
  \lim_{\epsilon\to 0}\int_{\R^3}|\nabla u_0|^2\phi_j^\epsilon dx=0.
\end{equation}
Indeed, by H\"{o}lder inequality and the Lebesgue Convergence Theorem we have
$$
0\leq \lim_{\epsilon\to 0}\int_{\R^3}|\nabla u_0|^2\phi_j^\epsilon dx=\lim_{\epsilon\to 0}\int_{B_\epsilon(x_j)}|\nabla u_0|^2\phi_j^\epsilon dx\leq \lim_{\epsilon\to 0}\int_{B_\epsilon(x_j)}|\nabla u_0|^2dx=0,
$$
thus \eqref{2.3e} holds. Similarly
\begin{equation}\label{2.3ee}
  \lim_{\epsilon\to 0}\int_{\R^3}|u_0|^2\phi_j^\epsilon dx=0,
\end{equation}
\begin{equation}\label{2.3eee}
  \lim_{\epsilon\to 0}\int_{\R^3}Q(x)|u_0|^6\phi_j^\epsilon dx=0,
\end{equation}
and
\begin{equation}\label{2.3eeee}
  \lim_{\epsilon\to 0}\int_{\R^3}f(x)|u_0|^2\phi_j^\epsilon dx=0.
\end{equation}
Some direct conclusions of \eqref{2.3e}-\eqref{2.3eeee} are
\begin{equation}\label{2.3f}
 \lim_{\epsilon\to 0}\lim_{n\to \infty}\int_{\R^3}|\nabla u_n|^2\phi_j^\epsilon dx\geq\lim_{\epsilon\to 0}\int_{\R^3}|\nabla u_0|^2\phi_j^\epsilon dx+\mu_j=\mu_j,
\end{equation}
\begin{equation}\label{2.3ff}
  \lim_{\epsilon\to 0}\lim_{n\to \infty}\int_{\R^3}| u_n|^2\phi_j^\epsilon dx=\lim_{\epsilon\to 0}\lim_{n\to \infty}\int_{B_{\epsilon}(x_j)}| u_n|^2\phi_j^\epsilon dx=\lim_{\epsilon\to 0}\int_{B_{\epsilon}(x_j)}| u_0|^2\phi_j^\epsilon dx=0,
\end{equation}
\begin{equation}\label{2.3fff}
  \begin{gathered}
\lim_{\epsilon\to 0}\lim_{n\to \infty}\int_{\R^3}Q(x)|u_n|^6\phi_j^\epsilon dx= \lim_{\epsilon\to 0}\lim_{n\to \infty}\int_{B_{\epsilon}(x_j)}Q(x)|u_n|^6\phi_j^\epsilon dx+Q(x_j)\nu_j\hfill\\
=\lim_{\epsilon\to 0}\int_{B_{\epsilon}(x_j)}| u_0|^6\phi_j^\epsilon dx+Q(x_j)\nu_j=Q(x_j)\nu_j,\hfill\\
\end{gathered}
\end{equation}
\begin{equation}\label{2.3ffff}
\begin{gathered}
\lim_{\epsilon\to 0}\lim_{n\to\infty}|\int_{\R^3}(\nabla u_n,\nabla\phi_j^\epsilon)u_ndx|\leq \lim_{\epsilon\to 0}\lim_{n\to\infty}\bigg(\int_{\R^3}|\nabla u_n|^2dx\bigg)^{\frac{1}{2}}\bigg(\int_{\R^3}u_n^2|\nabla\phi_j^\epsilon|^2dx\bigg)^{\frac{1}{2}}\hfill\\
\leq C\lim_{\epsilon\to 0}\lim_{n\to\infty}\bigg(\int_{\R^3}u_n^2|\nabla\phi_j^\epsilon|^2dx\bigg)^{\frac{1}{2}}=C\lim_{\epsilon\to 0}\lim_{n\to\infty}\bigg(\int_{B_{\epsilon}(x_j)}u_n^2|\nabla\phi_j^\epsilon|^2dx\bigg)^{\frac{1}{2}}\hfill\\
=C\lim_{\epsilon\to 0}\bigg(\int_{B_{\epsilon}(x_j)}u_0^2|\nabla\phi_j^\epsilon|^2dx\bigg)^{\frac{1}{2}}=C\lim_{\epsilon\to 0}\bigg(\int_{B_{\epsilon}(x_j)}u_0^2|\nabla\phi_j^\epsilon|^2dx\bigg)^{\frac{1}{2}}\hfill\\
\leq C\lim_{\epsilon\to 0}\bigg(\int_{B_{\epsilon}(x_j)}u_0^6dx\bigg)^{\frac{1}{6}}\bigg(\int_{B_{\epsilon}(x_j)}(\frac{4}{\epsilon})^3dx\bigg)^{\frac{1}{3}}=C\lim_{\epsilon\to 0}\bigg(\int_{B_{\epsilon}(x_j)}u_0^6dx\bigg)^{\frac{1}{6}}=0\hfill\\
\end{gathered}
\end{equation}
and
\begin{equation}\label{2.3fffff}
  \begin{gathered}
\lim_{\epsilon\to 0}\lim_{n\to \infty}\int_{\R^3}f(x)|u_n|^q\phi_j^\epsilon dx=\lim_{\epsilon\to 0}\lim_{n\to \infty}\int_{B_{\epsilon}(x_j)}f(x)|u_n|^q\phi_j^\epsilon dx\hfill\\
=\lim_{\epsilon\to 0}\int_{B_{\epsilon}(x_j)}f(x)|u_0|^q\phi_j^\epsilon dx=0.\hfill\\
\end{gathered}
\end{equation}
Since $\{u_n\}$ is bounded, $\lim_{\epsilon\to 0}\lim_{n\to \infty}\langle I^{'}(u_n),u_n\phi_j^\epsilon\rangle= 0$, that is
\begin{equation}\label{2.3g}
  \begin{gathered}
\lim_{\epsilon\to 0}\lim_{n\to \infty}\bigg[a\int_{\R^3}|\nabla u_n|^2\phi_j^\epsilon dx+a\int_{\R^3}(\nabla u_n,\nabla \phi_j^\epsilon)u_ndx+\int_{\R^3}u_n^2\phi_j^\epsilon dx\hfill\\
+b\int_{\R^3}|\nabla u_n|^2dx\int_{\R^3}|\nabla u_n|^2\phi_j^\epsilon dx+b\int_{\R^3}|\nabla u_n|^2dx\int_{\R^3}(\nabla u_n,\nabla\phi_j^\epsilon)u_ndx\hfill\\
-\int_{\R^3}u_n^6\phi_j^\epsilon dx-\lambda\int_{\R^3}f(x)|u_n|^q\phi_j^\epsilon dx\bigg]=0.\hfill\\
\end{gathered}
\end{equation}
By \eqref{2.3f}-\eqref{2.3g} we have $a\mu_j+ b\mu_j^2 \leq Q(x_j)\nu_j$, moreover $(\nu_j)^{\frac{1}{3}}\geq \frac{bS^2+\sqrt{b^2S^4+4aQ(x_j)S}}{2Q(x_j)}$ together with \eqref{2.3d}. Hence
\begin{align*}
 c+o(1)&=I(u_n)-\frac{1}{4}\langle I^{\prime}(u_n),u_n\rangle \\
       &=\frac{1}{4}\int_{\R^3}a|Du_n|^2+|u_n|^2dx+\frac{1}{12}\int_{\R^3}Q(x)|u_n|^6dx-\lambda(\frac{1}{q}-\frac{1}{4})\int_{\R^3}f|u_n|^qdx        \\
         & \geq \frac{a}{4}\mu_j+\frac{1}{12}Q(x_j)\nu_j+\frac{1}{4}\|u_0\|^2-\frac{4-q}{4q}\lambda|f|_{\frac{6}{6-q}}S^{-\frac{q}{2}}\|u_0\|^q\\
   & \geq \frac{abS^3}{4Q(x_j)}+\frac{b^3S^6}{24Q^2(x_j)}+\frac{(b^2S^4+4aQ(x_j)S)^{\frac{3}{2}}}{24Q^2(x_j)}-C_1\lambda^{\frac{2}{2-q}}\\
& \geq \frac{abS^3}{4|Q|_{\infty}}+\frac{b^3S^6}{24|Q|^2_{\infty}}+\frac{(b^2S^4+4a|Q|_{\infty}S)^{\frac{3}{2}}}{24|Q|^2_{\infty}}-C_1\lambda^{\frac{2}{2-q}},
  \end{align*}
where $C_1=\frac{2-q}{4q}\bigg[\frac{(4-q)|f|_{\frac{6}{6-q}}}{2S^{\frac{q}{2}}}\bigg]^{\frac{2}{2-q}}\in (0,+\infty)$, which is a contradiction! Therefore $\nu_j=0$ for any $j\in \tilde{\Gamma}$.

We now study the concentration at infinity, we define a new cut-off function $\varphi\in C_0^\infty\big(\R^3,[0,1]\big)$, such that $\varphi(x)=0$ if $|x|<R$, $\varphi(x)=1$ if $|x|>R$ and $|\nabla \varphi|\leq \frac{2}{R}$. Consider the following equalities
$$
\mu_\infty=\lim_{R\to\infty}\lim_{n\to\infty}\sup\int_{|x|>R}|\nabla u_n|^2\varphi dx
$$
and
$$
\nu_\infty=\lim_{R\to\infty}\lim_{n\to\infty}\sup\int_{|x|>R}|u_n|^6\varphi dx.
$$
Using the same technique as at the $x_j$, we obtain the same conclusion, namely, $\mu_\infty=\nu_\infty=0$.

Then in view of \eqref{2.3c}, we have $u_n\to u_0$ in $L^6(\R^3)$.
Next we will show that $u_n\to u_0$ in $H^1(\R^3)$. In fact, set
\begin{equation}\label{0.0a}
  v_n:=u_n-u_0,
\end{equation}
\begin{equation}\label{0.0b}
  I_{11}(u_n):=(a+b\int_{\R^3}|\nabla u_n|^2dx)\int_{\R^3}\big(\nabla u_n,\nabla v_n\big)dx,
\end{equation}
and
\begin{equation}\label{0.0c}
  I_{22}(u_n):=(a+b\int_{\R^3}|\nabla u_0|^2dx)\int_{\R^3}\big(\nabla u_0,\nabla v_n\big)dx.
\end{equation}
From the fact $v_n=u_n-u_0\to 0$ and $u_n\to u_0$ in $L^6(\R^3)$ we have
\begin{equation}\label{0.0d}
  \int_{\R^3}|\nabla u_n|^2dx-\int_{\R^3}|\nabla u_0|^2dx=\int_{\R^3}|\nabla v_n|^2dx+o(1),
\end{equation}
\begin{equation}\label{0.0e}
  I_{2}(u_n):= \int_{\R^3}(|u_n|^4u_n-|u_0|^4u_0)(u_n-u_0)dx=o(1),
\end{equation}
\begin{equation}\label{0.0f}
  I_{3}(u_n):=\int_{\R^3}f(x)(|u_n|^{q-2}u_n-|u_0|^{q-2}u_0)(u_n-u_0)dx=o(1).
\end{equation}
Then combing with \eqref{0.0a}-\eqref{0.0d} we have
\begin{align*}
 I_1(u_n)&:= I_{11}(u_n)-I_{22}(u_n) \\
       &=(a+b\int_{\R^3}|\nabla u_n|^2dx)\int_{\R^3}|\nabla v_n|^2dx+b\int_{\R^3}|\nabla v_n|^2dx\int_{\R^3}(\nabla u_0,\nabla v_n)dx+o(1)\\
         & \geq a\int_{\R^3}|\nabla v_n|^2dx+b\int_{\R^3}|\nabla v_n|^2dx\int_{\R^3}(\nabla u_0,\nabla v_n)dx+o(1)\\
   & = a\int_{\R^3}|\nabla v_n|^2dx+o(1).\tag{2.40}
  \end{align*}
Therefore \eqref{0.0e}-$(2.40)$ imply that
\begin{align*}
 o(1)&= \langle I^{\prime}(u_n)-I^{\prime}(u_0),v_n\rangle \\
       &=I_{1}(u_n)+\int_{\R^3}|v_n|^2dx-I_{2}(u_n)-I_{3}(u_n)\\
         &\geq a\int_{\R^3}|\nabla v_n|^2dx+\int_{\R^3}|v_n|^2dx+o(1)\\
   & = \|v_n\|^2+o(1),
  \end{align*}
and then $\|v_n\|\to 0$ which is $u_n\to u_0$ in $H^1(\R^3)$ and we complete the proof.
\end{proof}
\vskip0.5cm

\section{The proof of Theorem \ref{main theorem}}
From this section, we can find out that why we can see the Theorem \ref{corollary} as a corollary of the Theorem \ref{main theorem}. Before we find the two different positive solutions of problem \eqref{g}, we introduce the following well-known proposition:
\begin{proposition}\label{propo}
(Ekeland's variational principle \cite{j}, Theorem 1.1)
Let $V$ be a complete metric space and $F:V\to \R\cup\{+\infty\}$ be lower semicontinuous, bounded from below. Then for any $\epsilon>0$, there exists some point $v\in V$ with
$$
F(v)\leq \inf_VF+\epsilon,\ \ F(w)\geq F(v)-\epsilon d(v,w)\ \ \text{for all}\ \  w\in V.
$$
\end{proposition}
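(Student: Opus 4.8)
The plan is to prove this classical principle by the partial-order method, obtaining the required point as a minimal element for a suitably chosen ordering. First I would introduce on $V$ the relation $u\preceq w$, defined to hold precisely when $F(u)+\epsilon d(u,w)\leq F(w)$, and check that it is a genuine partial order. Reflexivity is immediate since $d(u,u)=0$; antisymmetry follows by adding the two defining inequalities coming from $u\preceq w$ and $w\preceq u$, which forces $d(u,w)\leq 0$ and hence $u=w$; transitivity follows by combining the two defining inequalities with the triangle inequality for $d$. The point of this ordering is that a $\preceq$-minimal element is exactly a point satisfying the desired variational inequality: if no $w\neq v$ obeys $w\preceq v$, then for every $w\neq v$ one has $F(w)+\epsilon d(w,v)>F(v)$, i.e.\ $F(w)\geq F(v)-\epsilon d(v,w)$, and equality holds trivially at $w=v$.

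Next I would construct a $\preceq$-decreasing sequence converging to such a minimal element. Choose $v_0$ with $F(v_0)\leq \inf_V F+\epsilon$. Given $v_n$, set $S_n:=\{w\in V:\ w\preceq v_n\}$, which is nonempty (it contains $v_n$) and closed by the lower semicontinuity of $F$, and select $v_{n+1}\in S_n$ with $F(v_{n+1})\leq \inf_{S_n}F+2^{-n}$. By transitivity the level sets are nested, $S_{n+1}\subseteq S_n$, so the values $F(v_n)$ decrease; being bounded below by $\inf_V F$, they converge.

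The key analytic step is the convergence of $\{v_n\}$. For $m\geq n$ one has $v_m\in S_n$, so $v_m\preceq v_n$ and therefore $\epsilon\,d(v_n,v_m)\leq F(v_n)-F(v_m)$. Since $\{F(v_n)\}$ is a convergent, hence Cauchy, sequence in $\R$, the estimate shows $\{v_n\}$ is Cauchy in $V$, and completeness yields $v_n\to v$. Lower semicontinuity gives $F(v)\leq \lim_n F(v_n)$, while $v\preceq v_0$ gives $F(v)\leq F(v_0)-\epsilon d(v,v_0)\leq F(v_0)\leq \inf_V F+\epsilon$, which is the first assertion.

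Finally I would establish the variational inequality by showing $v$ is $\preceq$-minimal. Suppose, for contradiction, that some $w\neq v$ satisfied $F(w)<F(v)-\epsilon d(w,v)$; then $F(w)+\epsilon d(w,v)<F(v)$, that is $w\prec v$. Using $v\preceq v_n$ (obtained by passing to the limit in the order via lower semicontinuity) together with transitivity, I get $w\preceq v_n$ for every $n$, so $w\in S_n$ and hence $F(w)\geq \inf_{S_n}F\geq F(v_{n+1})-2^{-n}$. Letting $n\to\infty$ forces $F(w)\geq F(v)$, contradicting $F(w)<F(v)-\epsilon d(w,v)\leq F(v)$. Thus $F(w)\geq F(v)-\epsilon d(v,w)$ for all $w\in V$, as required. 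I expect the main obstacle to be exactly this last paragraph together with the convergence step: the argument must squeeze everything out of completeness (to produce the limit) and out of lower semicontinuity alone (to transmit the order relations and minimality to the limit), since $F$ is not assumed continuous.
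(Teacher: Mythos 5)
Your proof is correct, but note that the paper itself offers no proof of this proposition: it is quoted verbatim as Theorem~1.1 of Ekeland's survey (reference \cite{j}) and used as a black box, so there is nothing internal to compare your argument against. What you have written is the standard self-contained proof: the order $u\preceq w \iff F(u)+\epsilon d(u,w)\leq F(w)$, the nested closed sets $S_n$, the almost-minimizing recursive choice $F(v_{n+1})\leq\inf_{S_n}F+2^{-n}$, the Cauchy estimate $\epsilon d(v_n,v_m)\leq F(v_n)-F(v_m)$ from monotonicity of $F(v_n)$, and the transfer of the order to the limit via closedness of each $S_n$ (i.e.\ lower semicontinuity). All steps check out, including the final contradiction, which correctly uses $F(v)\leq\lim_n F(v_n)\leq F(w)$ against the strict inequality $F(w)<F(v)$. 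The only point worth tightening is the extended-real-valued case: antisymmetry of $\preceq$ can fail between two distinct points where $F=+\infty$, so you should either assume $F$ proper and restrict the whole construction to $S_0=\{w: w\preceq v_0\}$, on which $F\leq F(v_0)\leq\inf_VF+\epsilon<+\infty$ (which you implicitly do by starting from $v_0$), or simply remark that points with $F=+\infty$ never enter any $S_n$. This is a cosmetic fix, not a gap.
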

Now, we will split it into three parts to prove Theorem \ref{main theorem}.
\subsection{Existence of the first positive solution}

If $\lambda\in (0,\lambda_0)$, then by Lemma \ref{2.1} there exists a $(PS)_c$ sequence $\{u_n\}$ of the functional $I(u)$. Lemma \ref{2.2} and Lemma \ref{2.3} tell us that $u_n\to u_0$ in $H^1(\R^3)$, then $I^{\prime}(u_0)=0$ and $I(u_0)=c>0$ which imply that $u_0$ is nontrivial.

Obviously, $|u_0|$ is also a solution of the problem \eqref{g}, then by maximum principle, $u_0$ is positive and then $u_0$ is a positive solution of the problem \eqref{g}.
\subsection{Existence of the second positive solution}
We investigate the second positive solution for problem \eqref{g} similar to an idea from \cite{jc}.
For $\rho>0$ given by Lemma \ref{2.1}(i), define
$$
\overline{B}_\beta=\{u\in H^1(\R^3),\|u\|\leq \beta\},\ \ \partial B_\beta=\{u\in H^1(\R^3),\|u\|= \beta\}
$$
and clearly $\overline{B}_\beta$ is a complete metric space with the distance
$$
d(u,v)=\|u-v\|, \ \forall u,v\in \overline{B}_\beta.
$$
Lemma \ref{2.1} tells us that
\begin{equation}\label{3.1a}
 I(u)|_{\partial B_\beta}\geq \eta>0.
\end{equation}
It's obvious that $I(u)$ is lower semicontinuous and bounded from below on $\overline{B}_\beta$. We claim that
\begin{equation}\label{3.1b}
  c_1:=\inf_{u\in \overline{B}_\beta}I(u)<0.
\end{equation}
Indeed, we chose a nonnegative function $\psi\in C_0^{\infty}(\R^3)$, and clearly $\psi\in H^1(\R^3)$. Since $1<q<2$, we have
\begin{align*}
\lim_{t\to 0}\frac{I(t\psi)}{t^q}&=\lim_{t\to 0}\frac{\frac{t^2}{2}\|\psi\|^2+\frac{bt^4}{4}\bigg(\int_{\R^3}|\nabla \psi|^2dx\bigg)^2-\frac{t^6}{6}\int_{\R^3}Q(x)|\psi|^6dx-\frac{\lambda t^q}{q}\int_{\R^3}f|\psi|^qdx}{t^q} \\
       &=-\frac{\lambda}{q}\int_{\R^3}f(x)|\psi|^qdx<0.
\end{align*}
Therefore there exists a sufficiently small $t_0>0$ such that $\|t_0\psi\|\leq \beta$ and $I(t_0\psi)<0$, which imply that \eqref{3.1b} holds.

By Proposition \ref{propo}, for any $n\in N$ there exists $\tilde{u}_n$ such that
\begin{equation}\label{3.1c}
 c_1\leq I(\tilde{u}_n)\leq c_1+\frac{1}{n},
\end{equation}
and
\begin{equation}\label{3.1d}
  I(v)\geq I(\tilde{u}_n)-\frac{1}{n}\|\tilde{u}_n-v\|, \ \  \forall v\in  \overline{B}_\beta.
\end{equation}
Firstly, we claim that $\|\tilde{u}_n\|<\beta$ for $n\in N$ sufficiently large. In fact, we will argue it by contradiction and just suppose that $\|\tilde{u}_n\|=\beta$ for infinitely many $n$, without loss of generality, we may assume that $\|\tilde{u}_n\|=\beta$ for any $n\in N$. It follows from \eqref{3.1a} that
$$
I(\tilde{u}_n)\geq \eta>0,
$$
then combing it with \eqref{3.1c}, we have $c_1\geq \eta>0$ which is a contradiction to \eqref{3.1b}.

Next, we will show that $I^{\prime}(\tilde{u}_n)\to 0$ in $(H^1(\R^3))^{*}$. Indeed, set
$$
v_n=\tilde{u}_n+tu, \ \  \forall u\in B_1=\{u\in H^1(\R^3),\|u\|=1\},
$$
where $t>0$ small enough such that $2t+t^2\leq \rho^2-\|\tilde{u}_n\|^2$ for fixed $n$ large, then
\begin{align*}
\|v_n\|^2&=\|\tilde{u}_n\|^2+2t(\tilde{u}_n,u)+t^2 \\
       &\leq \|\tilde{u}_n\|^2+2t+t^2\\
         &\leq \beta^2
\end{align*}
which imply that $v_n\in \overline{B}_\beta$. So it follows from \eqref{3.1d} that
$$
I(v_n)\geq I(\tilde{u}_n)-\frac{t}{n}\|\tilde{u}_n-v_n\|,
$$
that is,
$$
\frac{I(\tilde{u}_n+tu)-I(\tilde{u}_n)}{t}\geq -\frac{1}{n}.
$$
Letting $t\to 0$, then we have $\langle I^{\prime}(\tilde{u}_n), u\rangle\geq -\frac{1}{n}$ for any fixed $n$ large. Similarly, chose $t<0$ and $|t|$ small enough, repeating the process above we have $\langle I^{\prime}(\tilde{u}_n), u\rangle\leq \frac{1}{n}$ for any fixed $n$ large. Therefore the conclusion
$$
I^{\prime}(\tilde{u}_n), u\rangle\to 0\ \ \text{as}\ \ n\to\infty, \ \  \forall u\in B_1
$$
implies that $I^{\prime}(\tilde{u}_n)\to 0$ in $(H^1(\R^3))^{*}$.

Finally, we know that $\{\tilde{u}_n\}$ is a $(PS)_{c_1}$ sequence for the functional $I(u)$ and by the Lemma \ref{2.3}, there exists $u_1$ such that $\tilde{u}_n\to u_1$ in $H^1(\R^3)$ and then $I^{\prime}(u_1)=0$, that is $u_1$ a solution of problem \eqref{g} with $I(u_1)=c_1<0$. Moreover, the strong maximum principle implies that $u_1$ is positive.
\subsection{Existence of the positive ground state solution}
To obtain a positive ground state solution for problem \eqref{g}, we define
$$
m:= \inf_{\mathcal{N}}I(u),
$$
where $\mathcal{N}:=\{u\in H^1(\R^3):I^{\prime}(u)=0\}$. Firstly we have the following claim:
\vskip0.3cm
\underline{\textbf{Claim 1:}} $\mathcal{N}\neq\emptyset$ and $m\in(-\infty, c]$.
\vskip0.3cm
\underline{\textbf{Proof of the Claim 1:}} It's obvious that $\mathcal{N}\neq\emptyset$ and then $m\leq \max\{I(u_0),I(u_1)\}=c$.

On the other hand,
$\forall u\in \mathcal{N}$, we have
\begin{equation}\label{3.1e}
 {\begin{split}
    I(u)&= I(u)-\frac{1}{4}\langle I^{\prime}(u),u\rangle \\
       &=\frac{1}{4}\|u\|^2+\frac{1}{12}\int_{\R^3}Q(x)|u|^6dx-\lambda(\frac{1}{q}-\frac{1}{4})\int_{\R^3}f|u|^qdx        \\
         & \geq \frac{1}{4}\|u\|^2-\lambda\frac{4-q}{4q}\lambda|f|_{\frac{6}{6-q}}S^{-\frac{q}{2}}\|u\|^q,
  \end{split}
 }
\end{equation}
hence $I(u)$ is coercive and bounded below on $\mathcal{N}$ by the fact that $1<q<2$, that is $m>-\infty$.

By virtue of Claim 1, we can choose a minimizing sequence of $m$, that is a sequence $\{v_n\}\subset \mathcal{N}$ satisfying
$$
I(v_n)\to m\ \  \text{as} \ \ n\to\infty\ \   \text{and}  \ \ I^{\prime}(v_n)=0.
$$
Therefore $\{v_n\}$ is a $(PS)_m$ sequence of $I(u)$ with $m<\frac{abS^3}{4|Q|_{\infty}}+\frac{b^3S^6}{24|Q|^2_{\infty}}+\frac{(b^2S^4+4a|Q|_{\infty}S)^{\frac{3}{2}}}{24|Q|^2_{\infty}}-C_0\lambda^{\frac{2}{2-q}}$. Then similar to $(3.5)$, $v_n$ is bounded in $H^1(\R^3)$ and there exists a $v\in H^1(\R^3)$ such that $v_n\rightharpoonup v$ in $H^1(\R^3)$.
\vskip0.3cm
\underline{\textbf{Claim 2:}} $v\neq 0$.
\vskip0.3cm
\underline{\textbf{Proof of the Claim 2:}} Argue it by contradiction and just suppose that $v\equiv 0$, hence
\begin{equation}\label{h1}
 \int_{\R^3}f(x)|v_n|^qdx\to 0
\end{equation}
by $f(x)\in L^{\frac{6}{6-q}}$ and $v_n\to 0$ in $L_{loc}^r(\R^3)$ ($1\leq r<6$). Therefore we can infer from \eqref{h1} that
\begin{equation}\label{h2}
  \|v_n\|^2+b\bigg(\int_{\R^3}|\nabla v_n|^2dx\bigg)^2-\int_{\R^3}Q(x)|v_n|^6dx=o(1),
\end{equation}
and
\begin{equation}\label{h3}
  \frac{1}{2}\|v_n\|^2+\frac{b}{4}\bigg(\int_{\R^3}|\nabla v_n|^2dx\bigg)^2-\frac{1}{6}\int_{\R^3}Q(x)|v_n|^6dx=m+o(1).
\end{equation}
Let
\begin{equation}\label{h4}
  \|v_n\|^2\to l_1\ \ \text{and}\ \ b\bigg(\int_{\R^3}|\nabla v_n|^2dx\bigg)^2\to l_2,
\end{equation}
thus \eqref{h2}-\eqref{h4} give us that
\begin{equation}\label{h5}
   m=\frac{l_1}{3}+\frac{l_2}{12}
\end{equation}
and
\begin{equation}\label{h6}
  |Q|_\infty\int_{\R^3}|v_n|^6dx\geq\int_{\R^3}Q(x)|v_n|^6dx\to l_1+l_2.
\end{equation}
By the Sobolev imbedding theorem,
\begin{equation}\label{h7}
  \|v_n\|^2\geq aS\bigg(\int_{\R^3}|v_n|^6dx\bigg)^{\frac{1}{3}},
\end{equation}
and
\begin{equation}\label{h8}
  b\bigg(\int_{\R^3}|\nabla v_n|^2dx\bigg)^2\geq bS^2\bigg(\int_{\R^3}|v_n|^6dx\bigg)^{\frac{2}{3}},
\end{equation}
It follows from \eqref{h6}-\eqref{h8} that,
\begin{equation}\label{h9}
  l_1\geq aS\bigg(\frac{l_1+l_2}{|Q|_\infty}\bigg)^{\frac{1}{3}} \ \ \text{and}\ \ l_2\geq bS^2\bigg(\frac{l_1+l_2}{|Q|_\infty}\bigg)^{\frac{2}{3}}.
\end{equation}
Now we define a function
$$
h(t)=|Q|_\infty ^{\frac{2}{3}}t^2-bS^2t-aS|Q|_\infty ^{\frac{1}{3}}.
$$
It's obvious that there exist $t_1<0<t_2$ such that $h(t_1)=h(t_2)=0$, and
$$
t_1=\frac{bS^2-\sqrt{b^2S^4+4a|Q|_\infty S}}{2|Q|_\infty ^{\frac{2}{3}}}, \  \ t_2=\frac{bS^2+\sqrt{b^2S^4+4a|Q|_\infty S}}{2|Q|_\infty ^{\frac{2}{3}}}.
$$
Therefore if $h(t)\geq 0$, then $t\leq t_1$ or $t\geq t_2$. On the other hand,
$$
h\bigg((l_1+l_2)^{\frac{1}{3}}\bigg)= |Q|_\infty ^{\frac{2}{3}}(l_1+l_2)^{\frac{2}{3}}-bS^2(l_1+l_2)^{\frac{1}{3}}-aS|Q|_\infty ^{\frac{1}{3}}\stackrel{\mathrm{\eqref{h9}}}{\geq }0,
$$
hence
\begin{equation}\label{h10}
 (l_1+l_2)^{\frac{1}{3}}\geq t_2=\frac{bS^2+\sqrt{b^2S^4+4a|Q|_\infty S}}{2|Q|_\infty ^{\frac{2}{3}}}.
\end{equation}
As a direct conclusion of \eqref{h5} and \eqref{h9}-\eqref{h10}, we have
\begin{equation}\label{h11}
  {\begin{split}
     m &= \frac{l_1}{3}+\frac{l_2}{12}\geq \frac{1}{3}aS(l_1+l_2)^{\frac{1}{3}}+  \frac{1}{12}bS^2(l_1+l_2)^{\frac{2}{3}}\\
       & \geq \frac{abS^3}{4|Q|_{\infty}}+\frac{b^3S^6}{24|Q|^2_{\infty}}+\frac{(b^2S^4+4a|Q|_{\infty}S)^{\frac{3}{2}}}{24|Q|^2_{\infty}}>c,
   \end{split}
  }
\end{equation}
a contradiction! So $v\neq 0$.

Similar to the Lemma \ref{2.3}, $v_n\to v$ in $H^1(\R^3)$, then $I(v)=m$ and $I^{\prime}(v)=0$. Obviously, $|v|$ is also a critical point, by strong maximum principle $v$ is positive. Hence $v$ is a nontrivial ground state solution of problem \eqref{g}.
\vskip0.5cm
\section{ The convergence property of the ground state solution.}
In this section, we give the proof of Theorem \ref{theorem2}. Firstly, the energy functional of problem \eqref{g} is written as
$$
\begin{gathered}
I_b(u)=\frac{1}{2}\int_{\R^3}a|\nabla u|^2+|u|^2dx+\frac{b}{4}\bigg(\int_{\R^3}|\nabla u|^2dx\bigg)^2\hfill\\
-\frac{1}{6}\int_{\R^3}Q(x)|u|^6dx-\frac{\lambda}{q}\int_{\R^3}f(x)|u|^qdx,
\end{gathered}
$$
and the least energy of which is
$$
m_b:= \inf_{\mathcal{N}_b}I_b(u),
$$
where ${\mathcal{N}_b}:=\{u\in H^1(\R^3):I^{\prime}_b(u)=0\}$.
\vskip0.3cm
\noindent\textbf{Proof of the Theorem \ref{theorem2}:} For any fixed $\lambda\in(0,\lambda_0)$ and any $b>0$, $u_b\in \mathcal{N}_b$ is a positive ground solution of problem \eqref{g} obtained in Section 3 and satisfies $I_b(u_b)=m_b$. Thus for any $b\in (0,1]$,
\begin{align*}
 m_b&<\frac{abS^3}{4|Q|_{\infty}}+\frac{b^3S^6}{24|Q|^2_{\infty}}+\frac{(b^2S^4+4a|Q|_{\infty}S)^{\frac{3}{2}}}{24|Q|^2_{\infty}}\\    & \leq \frac{aS^3}{4|Q|_{\infty}}+\frac{S^6}{24|Q|^2_{\infty}}+\frac{(S^4+4a|Q|_{\infty}S)^{\frac{3}{2}}}{24|Q|^2_{\infty}}:=M<+\infty,
  \end{align*}
where $M$ is independent on $b$.

On one hand,
\begin{align*}
M&>I_{b_n}(u_{b_n})-\frac{1}{4}\langle I^{\prime}_{b_n}(u),u_{b_n}\rangle \\
       &=\frac{1}{4}\|u_{b_n}\|^2+\frac{1}{12}\int_{\R^3}Q(x)|u_{b_n}|^6dx-\lambda(\frac{1}{q}-\frac{1}{4})\int_{\R^3}f|u_{b_n}|^qdx        \\
         & \geq \frac{1}{4}\|u_{b_n}\|^2-\frac{4-q}{4q}\lambda|f|_{\frac{6}{6-q}}S^{-\frac{q}{2}}\|u_{b_n}\|^q.
  \end{align*}
This shows that $\{u_{b_n}\}$ is bounded in $H^1(\R^3)$ due to $1<q<2$. Hence, there exist a subsequence still denoted $\{b_n\}$ and $w_0\in H^1(\R^3)$ such that $u_{b_{n}}\rightharpoonup w_0$ in $H^1(\R^3)$. Similar to Section 3, we have $u_{b_{n}}\to w_0\neq 0$ in $H^1(\R^3)$.

On the other hand, $\forall \varphi\in C_0^{\infty}(H^1(\R^3))$, we have
$$
\begin{gathered}
\langle I^{\prime}_{b_n}(u_{b_n}),\varphi\rangle=\int_{\R^3}a\nabla u_{b_n}\nabla \varphi+u_{b_n}\varphi dx+b_n\int_{\R^3}|\nabla u_{b_n}|^2dx\int_{\R^3}\nabla u_{b_n}\nabla \varphi dx\hfill\\
-\int_{\R^3}Q(x)|u_{b_n}|^4u_{b_n}\varphi dx-\lambda\int_{\R^3}f|u_{b_n}|^{q-1}u_{b_n}\varphi dx\hfill\\
\to \int_{\R^3}a\nabla w_0\nabla \varphi+w_0\varphi dx
-\int_{\R^3}Q(x)|w_0|^4w_0\varphi dx-\lambda\int_{\R^3}f(x)|w_0|^{q-1}w_0\varphi dx\hfill\\
\end{gathered}
$$
as $b_n\to 0$ and $n\to \infty$. By the fact $I^{\prime}_{b_n}(u_{b_n})=0$ we have
\begin{gather*}
 \int_{\R^3}a\nabla w_0\nabla \varphi+w_0\varphi dx
-\int_{\R^3}Q(x)|w_0|^4w_0\varphi dx-\lambda\int_{\R^3}f(x)|w_0|^{q-2}w_0\varphi dx=0
\end{gather*}
for any $\varphi\in H^1(\R^3)$. Therefore $w_0$ is a solution of the problem \eqref{h}.\hspace{\stretch{1}}$\Box$
\vskip0.5cm
\section{ The proof of Theorem \ref{theorem3}.}
We will give some Lemmas before we prove the Theorem \ref{theorem3}. We just give the detail proof of the difference from Lemmas \ref{2.1}-\ref{2.3}.

The following Lemma has a great difference which seems failure in \cite{ckw} from our Lemma \ref{2.1}.

\begin{lemma}\label{5.1}
For any $\lambda\in (0, \tilde{\lambda}_0)$, the functional $I(u)$ of \eqref{g} satisfies the Mount-pass geometry around $0\in H^1(\R^3)$, that is,

$(i)$ there exist $\tilde{\eta},\tilde{\beta}>0$ such that $I(u)\geq \tilde{\eta}>0$ when $\|u\|=\tilde{\beta}$;

$(ii)$ there exists $\tilde{e}\in H^1(\R^3)$ with $\|\tilde{e}\|>\tilde{\beta}$ such that $I(\tilde{e})<0$.
\end{lemma}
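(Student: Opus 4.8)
The plan is to follow the scheme of Lemma \ref{2.1}, but this time I would retain the nonlocal term $\frac{b}{4}\bigl(\int_{\R^3}|\nabla u|^2\,dx\bigr)^2$ instead of discarding it, since this is precisely what produces a threshold depending on $b$. The essential structural point is that the positivity of $I$ near $0$ must now be read off in terms of the Dirichlet quantity $t:=\int_{\R^3}|\nabla u|^2\,dx$, because both the nonlocal term and the critical term live on $D^{1,2}(\R^3)$ and cannot be captured by $\|u\|$ alone; this is the ``great difference'' from Lemma \ref{2.1} advertised before the statement.

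For part $(i)$, I would first combine the Sobolev inequality \eqref{2.a}, in the form $\int_{\R^3}Q|u|^6\,dx\le |Q|_\infty S^{-3}t^3$, with the H\"older bound $\int_{\R^3}f|u|^q\,dx\le |f|_{\frac{6}{6-q}}S^{-q/2}t^{q/2}$, to obtain on the sphere $\|u\|=\tilde\beta$ the estimate
$$
I(u)\ \ge\ \tfrac12\tilde\beta^2+\tfrac{b}{4}t^2-\tfrac{|Q|_\infty}{6}S^{-3}t^3-\tfrac{\lambda}{q}|f|_{\frac{6}{6-q}}S^{-q/2}t^{q/2},\qquad t\in(0,\tilde\beta^2/a].
$$
Then I would analyze the one–variable function $P(t)=\frac{b}{4}t^2-\frac{|Q|_\infty}{6}S^{-3}t^3$: it vanishes at $t=0$, stays nonnegative up to $t=\frac{3bS^3}{2|Q|_\infty}$, and attains its maximum $\frac{b^3S^6}{12|Q|_\infty^2}$ at $t_\ast=\frac{bS^3}{|Q|_\infty}$. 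Choosing the radius $\tilde\beta$ so that $\tilde\beta^2/a=t_\ast$ confines $t$ to the interval where $P\ge 0$, and the decisive comparison becomes that of the hump $P(t)\sim b^3$ against the concave contribution $\frac{\lambda}{q}|f|_{\frac{6}{6-q}}S^{-q/2}t^{q/2}\sim \lambda b^{q/2}$ at the relevant scale $t\sim t_\ast\sim b$.

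Balancing these two quantities is what delivers the exponent: the right–hand side stays bounded below by a positive $\tilde\eta$ exactly when $\lambda b^{q/2}\lesssim b^{3}$, that is when $\lambda<\tilde{\lambda}_0=Cb^{\frac{6-q}{2}}$ with $C=C(q,Q,f)$ independent of $b$. Part $(ii)$ is then routine and identical in spirit to Lemma \ref{2.1}$(ii)$: fixing any nonnegative $u_0\in H^1(\R^3)$ with $\int_{\R^3}Q|u_0|^6\,dx>0$ (possible since $Q(x_0)=|Q|_\infty>0$ and $Q\ge0$), the term $-\frac{t^6}{6}\int_{\R^3}Q|u_0|^6\,dx$ dominates the positive quartic $\frac{bt^4}{4}\bigl(\int_{\R^3}|\nabla u_0|^2\,dx\bigr)^2$ as $t\to+\infty$, so $I(tu_0)\to-\infty$ and one takes $\tilde e=t_0u_0$ with $t_0$ large.

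I expect the main obstacle to be the bookkeeping in part $(i)$: one must verify that the genuine positive contribution at the minimizing value of $t$ is the cubic hump $P(t)\sim b^3$ rather than the $O(ab)$ term $\frac12\tilde\beta^2$, and track the constants carefully enough to see that the threshold is proportional to $b^{\frac{6-q}{2}}$ with a constant depending only on $q,Q,f$. Concretely, the delicate step is locating the minimizer $t^{\dagger}$ of the full right–hand side—which sits near $t_\ast\sim b$ precisely when $\lambda\sim b^{\frac{6-q}{2}}$—and confirming that the resulting lower bound $\tilde\eta$ remains usable uniformly in $a>0$; once this scaling is pinned down, everything else parallels the argument of Lemma \ref{2.1}.
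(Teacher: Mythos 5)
Your overall strategy coincides with the paper's: bound $\int_{\R^3}Q|u|^6dx$ and $\int_{\R^3}f|u|^qdx$ by powers of the Dirichlet integral via \eqref{2.a} and H\"older, and let the nonlocal term $\frac b4\bigl(\int_{\R^3}|\nabla u|^2dx\bigr)^2$ supply the positive hump, which is what forces $\tilde\lambda_0\sim b^{\frac{6-q}{2}}$. The packaging differs slightly: the paper discards $\frac12\|u\|^2$ altogether, writes the remainder as $t^q\bigl(\frac b4 t^{4-q}-\frac{|Q|_\infty}{6S^3}t^{6-q}-\frac{\lambda|f|_{\frac{6}{6-q}}}{qS^{q/2}}\bigr)$ with $t=\bigl(\int_{\R^3}|\nabla u|^2dx\bigr)^{1/2}$, and defines $\tilde\lambda_0$ so that the bracket is positive at the maximizer $t^2=\frac{3(4-q)bS^3}{2(6-q)|Q|_\infty}$, whereas you keep $\frac12\tilde\beta^2$ and compare the hump at $t_*$ with $\lambda t_*^{q/2}$. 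Both routes produce the exponent $\frac{6-q}{2}$, and your part $(ii)$ is the same as the paper's.

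The obstacle you flag at the end is, however, genuine, and your proposal does not close it. Your lower bound is a function of the Dirichlet quantity $t$, which is not constant on the $H^1$-sphere $\|u\|=\tilde\beta$: it ranges over all of $(0,\tilde\beta^2/a]$. On the portion of the sphere where $t$ is small the hump $P(t)$ is negligible and the only remaining positive term is $\frac12\tilde\beta^2=\frac{abS^3}{2|Q|_\infty}$; balancing this against $\frac{\lambda}{q}|f|_{\frac{6}{6-q}}S^{-q/2}t^{q/2}$ over $t\le t_*$ yields a restriction of the form $\lambda\lesssim ab^{\frac{2-q}{2}}$, which depends on $a$ and is not implied by $\lambda<Cb^{\frac{6-q}{2}}$ when $a$ is small relative to $b$. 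So the ``decisive comparison at the scale $t\sim t_*$'' controls only one slice of the sphere, not its infimum. The paper's factored form avoids any $a$-dependence in the threshold, but at the price of proving positivity only where $t$ equals (or lies near) the maximizing value $t_{**}$, i.e.\ on a level set of the Dirichlet seminorm rather than on an $H^1$-sphere; that still suffices for the minimax construction, since $\{\int_{\R^3}|\nabla u|^2dx=t_{**}^2\}$ separates $0$ from any $\tilde e$ with large Dirichlet norm and every admissible path must cross it. If you adopt that factored form and take the separating set in the Dirichlet quantity instead of the $H^1$-norm, your argument closes cleanly and the constant $C_2$ is read off from the maximum of $\frac b4 t^{4-q}-\frac{|Q|_\infty}{6S^3}t^{6-q}$; as written, with the sphere $\|u\|=\tilde\beta$ and $\tilde\beta^2=at_*$, the small-gradient part of the sphere is unaccounted for.
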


\begin{proof}
 $(i)$ By the definition of $I(u)$,
\begin{align*}
 I(u)&= \frac{1}{2}\|u\|^2+\frac{b}{4}\bigg(\int_{\R^3}|\nabla u|^2dx\bigg)^2
-\frac{1}{6}\int_{\R^3}Q(x)|u|^6dx-\frac{\lambda}{q}\int_{\R^3}f(x)|u|^qdx                                                                      \\
 &\geq \frac{b}{4}\bigg(\int_{\R^3}|\nabla u|^2dx\bigg)^2
-\frac{1}{6}\int_{\R^3}Q(x)|u|^6dx-\frac{\lambda}{q}\int_{\R^3}f(x)|u|^qdx         \\
         & \geq \frac{b}{4}\bigg(\int_{\R^3}|\nabla u|^2dx\bigg)^2-\frac{|Q|_\infty}{6S^3}\bigg(\int_{\R^3}|\nabla u|^2dx\bigg)^3-\frac{\lambda |f|_{\frac{6}{6-q}}}{qS^{\frac{q}{2}}}\bigg(\int_{\R^3}|\nabla u|^2dx\bigg)^{\frac{q}{2}}\\
&=t^q\bigg(\frac{b}{4}t^{4-q}-\frac{|Q|_\infty}{6S^3}t^{6-q}-\frac{\lambda |f|_{\frac{6}{6-q}}}{qS^{\frac{q}{2}}}\bigg),
  \end{align*}
where $t:=\bigg(\int_{\R^3}|\nabla u|^2dx\bigg)^{\frac{1}{2}}$. Therefore letting $\tilde{\lambda}_0=C_2b^{\frac{6-q}{2}}$, here $C_2:=\frac{qS^{\frac{q}{2}}}{2(6-q)|f|_{\frac{6}{6-q}}}\bigg(\frac{3(4-q)S^3}{2(6-q)}\bigg)^{\frac{4-q}{2}}$, and then there exist $\tilde{\eta},\tilde{\beta}>0$ such that $I(u)\geq \tilde{\eta}>0$ when $\|u\|=\tilde{\beta}$ for any $\lambda\in (0,\tilde{\lambda_0})$.

$(ii)$ The proof is similar to Lemma \ref{2.1}$(ii)$, so we omit it here.
\end{proof}
Now by Lemma \ref{5.1}, we can construct a $(PS)$ sequence of the functional $I(u)$ at the level
\begin{equation}\label{5.1a}
\tilde{ c}:=\inf_{\gamma\in \Gamma}\sup_{t\in [0,1]}I(\gamma(t)),
\end{equation}
where
\begin{equation}\label{5.1b}
  \Gamma:=\{\gamma\in C([0,1],H^1(\R^3)):\gamma(0)=0, I(\gamma(1)<0\},
\end{equation}
 that is, there exists a sequence $\{u_n\}\subset H^1(\R^3)$ satisfies
\begin{equation}\label{5.1c}
  I(u_n)\to \tilde{c},\ \ I^{\prime}(u_n)\to 0\ \  \text{as}\ \ n\to \infty.
\end{equation}
\vskip0.3cm
\begin{lemma}\label{5.2}
Assume $1<q<2$ and $\lambda\in(0,\tilde{\lambda}_0)$, then the critical energy
$$
\tilde{c}<\frac{abS^3}{4|Q|_{\infty}}+\frac{b^3S^6}{24|Q|^2_{\infty}}+\frac{(b^2S^4+4a|Q|_{\infty}S)^{\frac{3}{2}}}{24|Q|^2_{\infty}}-C_3\lambda^{\frac{2}{2-q}},
$$
where $C_3=\frac{2-q}{2}\bigg(\frac{3q}{2a}\bigg)^{\frac{q}{2-q}}\bigg(\frac{(6-q)|f|_{\frac{6}{6-q}}}{6qS^{\frac{q}{2}}}\bigg)^{\frac{2}{2-q}}$ is a positive constant.
\end{lemma}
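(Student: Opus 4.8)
The plan is to reuse the argument of Lemma~\ref{2.2} almost verbatim, since the functional $I$ attached to \eqref{g} is unchanged; only the admissible range $\lambda\in(0,\tilde\lambda_0)$ and the constant in the target inequality differ. I would again take the truncated extremal $v_\epsilon$ of \eqref{2.1d}, note that by Lemma~\ref{5.1} the mountain-pass value satisfies $\tilde c\le\max_{t\ge0}I(tv_\epsilon)$, and check that the maximizer $t_\epsilon$ still exists and lies in a compact interval $(t_1,t_2)$ independent of $\epsilon$ and $\lambda$. The location of $t_\epsilon$ is immediate because the first- and second-order conditions \eqref{2.1h}--\eqref{2.1i} produced by differentiating $t\mapsto I(tv_\epsilon)$ depend only on $I$ itself and are insensitive to which mountain-pass geometry (Lemma~\ref{2.1} or Lemma~\ref{5.1}) was used to locate the level. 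Hence it suffices to estimate $\max_{t\ge0}I(tv_\epsilon)$.

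Next I would split $I(tv_\epsilon)=g(t)+\tfrac{t^6}{6}\int_{\R^3}\bigl(Q(x_0)-Q(x)\bigr)|v_\epsilon|^6\,dx-\tfrac{\lambda t^q}{q}\int_{\R^3}f|v_\epsilon|^q\,dx$, where $g$ is the Kirchhoff part with $Q$ frozen at $Q(x_0)$. The closed-form maximum $\max_{t\ge0}g(t)$ is exactly the computation \eqref{2.1k}, yielding the Kirchhoff threshold $\tfrac{abS^3}{4|Q|_\infty}+\tfrac{b^3S^6}{24|Q|_\infty^2}+\tfrac{(b^2S^4+4a|Q|_\infty S)^{3/2}}{24|Q|_\infty^2}+O(\epsilon)$. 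The (nonnegative) perturbation term is controlled by hypothesis $(ii)$ of Theorem~\ref{main theorem} exactly as in \eqref{2.1m}, giving $O(\epsilon^\alpha)$, $O(\epsilon^3|\ln\epsilon|)$ or $O(\epsilon^3)$ according as $\alpha<3$, $\alpha=3$, $\alpha>3$; and the concave term is bounded below by $\gtrsim\lambda\epsilon^{q/2}$ as in \eqref{2.1kk}, where the lower bound $t_\epsilon\ge t_1$ is used.

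The single genuine difference from Lemma~\ref{2.2} is the constant in front of $\lambda^{2/(2-q)}$. Since $\tilde c$ must be pushed below the compactness threshold that the subsequent lemma (Lemma~\ref{5.3}) extracts, and that lemma replaces the $\tfrac14$-combination of Lemma~\ref{2.3} by $I(u)-\tfrac16\langle I'(u),u\rangle$, the relevant elementary minimization becomes $\min_{t\ge0}\bigl(\tfrac{a}{3}t^2-\tfrac{6-q}{6q}\lambda|f|_{\frac{6}{6-q}}S^{-q/2}t^q\bigr)=-C_3\lambda^{2/(2-q)}$; here the quadratic coefficient $\tfrac{a}{3}$ comes from $\tfrac13\|u\|^2\ge\tfrac{a}{3}\int_{\R^3}|\nabla u|^2\,dx$ and the factor $\tfrac{6-q}{6q}$ from the $\tfrac16$-combination, and a direct computation reproduces precisely the stated $C_3=\tfrac{2-q}{2}\bigl(\tfrac{3q}{2a}\bigr)^{q/(2-q)}\bigl(\tfrac{(6-q)|f|_{\frac{6}{6-q}}}{6qS^{q/2}}\bigr)^{2/(2-q)}$. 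Assembling the three estimates and invoking the hypothesis $\alpha>\tfrac q2$---so that the subcritical gain $\epsilon^{q/2}$ dominates both the $O(\epsilon)$ remainder from $\max g$ and the critical perturbation $O(\epsilon^\alpha)$---I would choose $\epsilon$ small (balanced against $\lambda$) to conclude that $\max_{t\ge0}I(tv_\epsilon)$, and hence $\tilde c$, lies below the Kirchhoff threshold minus $C_3\lambda^{2/(2-q)}$.

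I expect the main obstacle to be bookkeeping consistency rather than any new idea: one must make sure that the single constant $C_3$ produced here is identical to the one the companion compactness argument of Lemma~\ref{5.3} will require, and that the hypothesis $\alpha>q/2$ is exactly what lets the subcritical gain absorb the critical perturbation (this is the point where the relaxation of $\alpha$ in Theorem~\ref{theorem3} is felt). Verifying $t_\epsilon\in(t_1,t_2)$ uniformly in $\epsilon,\lambda$ and running through the three $\alpha$-regimes of the perturbation term are then routine once the corresponding steps of Lemma~\ref{2.2} are in place.
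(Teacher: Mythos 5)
Your proposal follows exactly the route the paper takes: the paper's proof of Lemma~\ref{5.2} consists of the single remark that the argument is identical to that of Lemma~\ref{2.2} with the constant in the $\lambda^{\frac{2}{2-q}}$ term replaced by $C_3$, and your derivation of $C_3$ from the minimization $\min_{t\ge 0}\bigl(\tfrac{a}{3}t^2-\tfrac{(6-q)}{6q}\lambda|f|_{\frac{6}{6-q}}S^{-q/2}t^q\bigr)$ (reflecting the $\tfrac16$-combination used in Lemma~\ref{5.3}) reproduces the stated constant correctly. No substantive difference from the paper's argument.
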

\vskip0.3cm
\begin{proof}
The proof is totally same as Lemma \ref{2.2} except changing the constant $C_2$ to $C_3$ in \eqref{2.1n}, \eqref{2.1nn}, and \eqref{2.1nnn}.
\end{proof}
\vskip0.3cm
\begin{lemma}\label{5.3}
The functional $I(u)$ satisfies the $(PS)_{\tilde{c}}$ condition when $\lambda\in (0,\tilde{\lambda}_0)$ and
$$
\tilde{c}<\frac{abS^3}{4|Q|_{\infty}}+\frac{b^3S^6}{24|Q|^2_{\infty}}+\frac{(b^2S^4+4a|Q|_{\infty}S)^{\frac{3}{2}}}{24|Q|^2_{\infty}}-C_3\lambda^{\frac{2}{2-q}}.
$$
\end{lemma}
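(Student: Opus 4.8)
The plan is to follow the scheme of Lemma \ref{2.3} while neutralising the nonlocal Kirchhoff term by \emph{freezing} its coefficient along the sequence, as announced in the introduction. Start with a sequence $\{u_n\}\subset H^1(\R^3)$ with $I(u_n)\to\tilde c$ and $I'(u_n)\to0$. The first step is boundedness, which is insensitive to the new mountain--pass geometry of Lemma \ref{5.1}: exactly as in Lemma \ref{2.3},
\[
\tilde c+1+o(1)\|u_n\|\ge I(u_n)-\tfrac14\langle I'(u_n),u_n\rangle\ge\tfrac14\|u_n\|^2-\tfrac{4-q}{4q}\lambda|f|_{\frac{6}{6-q}}S^{-\frac q2}\|u_n\|^q,
\]
and since $1<q<2$ this gives $\sup_n\|u_n\|<\infty$. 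Passing to a subsequence I may assume $u_n\rightharpoonup u$ in $H^1(\R^3)$, $u_n\to u$ in $L^r_{loc}(\R^3)$ $(1\le r<6)$ and a.e., and, crucially, $\int_{\R^3}|\nabla u_n|^2\,dx\to A^2$ for some $A\ge0$.

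Next I would freeze the Kirchhoff coefficient at $A^2$ and pass to the companion functional $J$ of the introduction, whose leading coefficient is $a+bA^2$. Two elementary computations make $\{u_n\}$ a Palais--Smale sequence for the \emph{local} functional $J$: first $J(u_n)-I(u_n)=\frac{bA^2}{2}\int_{\R^3}|\nabla u_n|^2\,dx-\frac b4\big(\int_{\R^3}|\nabla u_n|^2\,dx\big)^2\to\frac{b(A^2)^2}{4}$, so $J(u_n)\to\tilde c+\frac{b(A^2)^2}{4}$; second, for every $\varphi\in H^1(\R^3)$,
\[
\langle J'(u_n)-I'(u_n),\varphi\rangle=b\Big(A^2-\int_{\R^3}|\nabla u_n|^2\,dx\Big)\int_{\R^3}\nabla u_n\nabla\varphi\,dx=o(1)\|\varphi\|,
\]
because the bracket tends to $0$ while $|\int_{\R^3}\nabla u_n\nabla\varphi\,dx|\le C\|\varphi\|$. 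Hence $J'(u_n)\to0$ in $(H^1(\R^3))^*$, and $\{u_n\}$ is a $(PS)$ sequence for $J$ at level $\tilde c+\frac{b(A^2)^2}{4}$. Since $J$ carries no nonlocal term, its weak limit is a genuine critical point: linear and subcritical pieces pass by weak convergence, while $|u_n|^4u_n\rightharpoonup|u|^4u$ in $L^{6/5}(\R^3)$ (bounded sequence, a.e.\ convergence) disposes of the critical term against fixed test functions, so $J'(u)=0$.

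The core of the argument is the concentration--compactness analysis, run on $\{u_n\}$ as a $(PS)$ sequence for the local functional $J$ exactly as in Lemma \ref{2.3}. Extract measures with $|\nabla u_n|^2\rightharpoonup d\mu\ge|\nabla u|^2+\sum_j\mu_j\delta_{x_j}+\mu_\infty\delta_\infty$ and $|u_n|^6\rightharpoonup d\nu=|u|^6+\sum_j\nu_j\delta_{x_j}+\nu_\infty\delta_\infty$, with $\mu_j\ge S\nu_j^{1/3}$. Testing $J'(u_n)$ against $u_n\phi_j^\epsilon$ yields $(a+bA^2)\mu_j\le Q(x_j)\nu_j$; since $A^2\ge\mu_j$ this already contains the inequality $a\mu_j+b\mu_j^2\le Q(x_j)\nu_j$ used in Lemma \ref{2.3}. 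To exclude $\nu_j>0$ I would evaluate $\tilde c+\frac{b(A^2)^2}{4}=\lim_n\big[J(u_n)-\frac14\langle J'(u_n),u_n\rangle\big]$ and observe that in $\frac{a+bA^2}{4}A^2=\frac{aA^2}{4}+\frac{b(A^2)^2}{4}$ the frozen shift on the left cancels the $b$--part on the right; what remains is the identical threshold bookkeeping of Lemma \ref{2.3}, giving
\[
\tilde c\ge\frac a4\mu_j+\frac1{12}Q(x_j)\nu_j+\frac14\|u\|^2-\frac{4-q}{4q}\lambda|f|_{\frac{6}{6-q}}S^{-\frac q2}\|u\|^q.
\]
With $a\mu_j+b\mu_j^2\le Q(x_j)\nu_j$ and $\mu_j\ge S\nu_j^{1/3}$ the first two terms are bounded below by $\Lambda:=\frac{abS^3}{4|Q|_{\infty}}+\frac{b^3S^6}{24|Q|^2_{\infty}}+\frac{(b^2S^4+4a|Q|_{\infty}S)^{\frac32}}{24|Q|^2_{\infty}}$, while Young's inequality absorbs the $\lambda$--remainder into $-C\lambda^{\frac{2}{2-q}}$; the resulting bound contradicts the hypothesis $\tilde c<\Lambda-C_3\lambda^{\frac{2}{2-q}}$, the constant $C_3$ of Lemma \ref{5.2} being the one compatible with the mountain--pass estimate there. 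The same device kills $\nu_\infty$, so all atoms vanish and $u_n\to u$ in $L^6(\R^3)$. Finally, writing $v_n:=u_n-u$, strong convergence in $H^1(\R^3)$ follows verbatim from the end of Lemma \ref{2.3}: the nonlinear pairings are $o(1)$, Brezis--Lieb gives $\int_{\R^3}|\nabla u_n|^2\,dx-\int_{\R^3}|\nabla u|^2\,dx=\int_{\R^3}|\nabla v_n|^2\,dx+o(1)$, and expanding $\langle I'(u_n)-I'(u),v_n\rangle=o(1)$ leaves $\|v_n\|^2=o(1)$.

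I expect the delicate point to be exactly this third step, namely the energy/threshold bookkeeping: one must confirm that after the cancellation of the freezing shift the concentration energy of $J$ is bounded below by the \emph{same} constant $\Lambda$ as in Lemma \ref{2.3}, and that the Young optimisation of $\frac14\|u\|^2-\frac{4-q}{4q}\lambda|f|_{\frac{6}{6-q}}S^{-\frac q2}\|u\|^q$ is carried out with the constant compatible with the hypothesis. The passage from $I$ to $J$ and back (verifying both the shifted energy level and that $u$ solves the frozen equation) is the only genuinely new ingredient beyond Lemma \ref{2.3}; everything downstream is a transcription of that proof with the single replacement of the Kirchhoff coefficient $a$ by its frozen value $a+bA^2$.
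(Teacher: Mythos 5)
Your overall strategy --- freeze the Kirchhoff coefficient at $A^2=\lim_n\int_{\R^3}|\nabla u_n|^2dx$, check that $\{u_n\}$ becomes a $(PS)_{\tilde c+\frac{bA^4}{4}}$ sequence for the local functional $J$, and then exclude concentration --- coincides with the paper's up to and including the construction of $J$. Where you diverge is in how concentration is excluded: you rerun the full Lions measure analysis of Lemma \ref{2.3} on $J$ (atoms $\mu_j,\nu_j$, the inequality $(a+bA^2)\mu_j\le Q(x_j)\nu_j$, hence $a\mu_j+b\mu_j^2\le Q(x_j)\nu_j$ via $\mu_j\le A^2$; this step is correct), whereas the paper never extracts measures in Section 5: it uses the Brezis--Lieb splitting $v_n=u_n-u$, the relation $\tilde l_1+\tilde l_2=\tilde l_3$ coming from $\langle J'(u_n),u_n\rangle=o(1)$, the algebraic lower bound $\frac{l_1}{3}+\frac{l_2}{12}\ge\Lambda$ already established in Section 3.3 via the quadratic $h(t)$, and the bound $J(u)\ge -C_3\lambda^{\frac{2}{2-q}}+\frac{bA^2}{4}\int_{\R^3}|\nabla u|^2dx$ obtained from $J(u)-\frac16\langle J'(u),u\rangle$ at the weak limit. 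Both are legitimate ways to quantify the lost energy, and your observation that the frozen shift $\frac{bA^4}{4}$ cancels in the $\frac14$-combination is correct.

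The point you yourself flag as delicate is, however, a genuine gap rather than bookkeeping. Your exclusion step, via $J(u_n)-\frac14\langle J'(u_n),u_n\rangle$ and Young's inequality applied to $\frac14\|u\|^2-\frac{4-q}{4q}\lambda|f|_{\frac{6}{6-q}}S^{-\frac q2}\|u\|^q$, yields $\tilde c\ge\Lambda-C_1\lambda^{\frac{2}{2-q}}$, where $\Lambda:=\frac{abS^3}{4|Q|_{\infty}}+\frac{b^3S^6}{24|Q|^2_{\infty}}+\frac{(b^2S^4+4a|Q|_{\infty}S)^{\frac32}}{24|Q|^2_{\infty}}$ and $C_1$ is the constant of Lemma \ref{2.2}, which does not depend on $a$. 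The hypothesis of the present lemma, and the threshold actually reached in Lemma \ref{5.2}, involve instead $C_3=\frac{2-q}{2}\bigl(\frac{3q}{2a}\bigr)^{\frac{q}{2-q}}\bigl(\frac{(6-q)|f|_{\frac{6}{6-q}}}{6qS^{\frac q2}}\bigr)^{\frac{2}{2-q}}$, which tends to $0$ as $a\to\infty$. Hence $C_1\le C_3$ fails for $a$ large, and in that regime $\tilde c<\Lambda-C_3\lambda^{\frac{2}{2-q}}$ is not contradicted by $\tilde c\ge\Lambda-C_1\lambda^{\frac{2}{2-q}}$, so your argument does not conclude. To close it along your route you would have to restate Lemmas \ref{5.2}--\ref{5.3} with $\max\{C_1,C_3\}$ in the threshold (and verify that Lemma \ref{5.2} still goes through), or else extract the $\lambda$-correction as the paper does, from $J(u)-\frac16\langle J'(u),u\rangle$ at the weak limit, where the coefficient $\frac a3$ in front of $\int_{\R^3}|\nabla u|^2dx$ is precisely what produces the $a$-dependent constant $C_3$.
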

\vskip0.3cm
\begin{proof}
For $\lambda\in (0,\tilde{\lambda}_0)$, by Lemma \ref{5.1}, there exists a sequence $\{u_n\}$ assuming that
$$
I(u_n)\to \tilde{c},\ \ I^{\prime}(u_n)\to 0\ \  \text{as}\ \ n\to \infty.
$$
Then similar to Lemma \ref{2.1}, $\{u_n\}$ is bounded in $H^1(\R^3)$
and there exist a subsequence still denoted by $\{u_n\}$ and $u\in H^1(\R^3)$ such that
\begin{equation}\label{5.4a}
  \left\{
  \begin{array}{ll}
   u_n\rightharpoonup u~\text{in}~H^1(\R^3), \\
    u_n\rightarrow u~\text{in}~L^r_{loc}(\R^3),~\text{where}~1\leq r<6,\\
    u_n\rightarrow u~a.e.~\text{in}~\R^3.
  \end{array}
\right.
\end{equation}
Set
\begin{equation}\label{5.4b}
  A^2:=\lim\limits_{n\to\infty}\int_{\R^3}|\nabla u_n|^2dx
\end{equation}
and
\begin{equation}\label{5.4c}
  v_n:=u_n-u.
\end{equation}
We can infer from the Brezis-Lieb Lemma in \cite{jd} that
\begin{equation}\label{5.4d}
 A^2=\int_{\R^3}|\nabla u_n|^2dx+o(1)=\int_{\R^3}|\nabla v_n|^2dx+\int_{\R^3}|\nabla u|^2dx+o(1),
 \end{equation}
\begin{equation}\label{5.4dd}
  \int_{\R^3}|u_n|^2dx=\int_{\R^3}|v_n|^2dx+\int_{\R^3}|u|^2dx+o(1),
\end{equation}
\begin{equation}\label{5.4ddd}
  \int_{\R^3}Q(x)|u_n|^6dx=\int_{\R^3}Q(x)|v_n|^6dx+\int_{\R^3}Q(x)|u|^6dx+o(1),
\end{equation}
and since $f\in L^{\frac{6}{6-q}}$, we have
\begin{equation}\label{5.4dddd}
  \int_{\R^3}f(x)|u_n|^qdx\stackrel{\mathrm{\eqref{5.4a}}}{=}\int_{\R^3}f(x)|u|^qdx+o(1).
\end{equation}
Now we introduce a new functional, for any $u\in H^1(\R^3)$,
\begin{equation}\label{5.4e}
  J(u):=\frac{a+bA^2}{2}\int_{\R^3}|\nabla u|^2dx+\frac{1}{2}\int_{\R^3}|u|^2dx-\frac{1}{6}\int_{\R^3}Q(x)|u|^6dx-\frac{\lambda}{q}\int_{\R^3}f(x)|u|^q.
\end{equation}
Hence
\begin{equation}\label{5.4f}
  {\begin{split}
     J(u_n) & =I(u_n)+\frac{bA^2}{2}\int_{\R^3}|\nabla u_n|^2dx-\frac{b}{4}\bigg(\int_{\R^3}|\nabla u_n|^2dx\bigg)^2 \\
       &\stackrel{\mathrm{\eqref{5.4d}}}{=}\tilde{c}+\frac{bA^4}{2}-\frac{bA^4}{4}+o(1)=\tilde{c}+\frac{bA^4}{4}+o(1)
   \end{split}
  }
\end{equation}
and for any $\varphi\in H^1(\R^3)$ we have
\begin{align*}
  \langle J^{\prime}(u_n),\varphi\rangle & =\langle I^{\prime}(u_n),\varphi\rangle+bA^2\int_{\R^3}\nabla u_n \nabla \varphi dx-b\int_{\R^3}|\nabla u_n|^2dx\int_{\R^3}\nabla u_n\nabla \varphi dx \\
   & =b(A^2-\int_{\R^3}|\nabla u_n|^2dx)\int_{\R^3}\nabla u_n\nabla \varphi dx\\
   &=b\bigg(\int_{\R^3}\nabla u_n\nabla \varphi dx\bigg)o(1)=o(1)
\end{align*}
show that $\{u_n\}$ is a bounded $(PS)_{c+\frac{bA^4}{4}}$ sequence of $J(u)$. In particular
\begin{eqnarray*}
o(1) &=& \langle J^{\prime}(u_n),\varphi\rangle \\
   &=& (a+bA^2)\int_{\R^3}\nabla u_n\nabla \varphi dx+\int_{\R^3} u_n \varphi dx-\int_{\R^3}Q(x)|u_n|^4u_n\varphi dx \\
   &&-\lambda\int_{\R^3}f(x)|u_n|^{q-2}u_n\varphi \\
   &\stackrel{\mathrm{\eqref{5.4a}}}{=}& (a+bA^2)\int_{\R^3}\nabla u\nabla \varphi dx+\int_{\R^3} u \varphi dx-\int_{\R^3}Q(x)|u|^4u\varphi\\
   &&-\lambda\int_{\R^3}f(x)|u|^{q-2}u\varphi+o(1)\\
   &=&\langle J^{\prime}(u),\varphi\rangle+o(1)
\end{eqnarray*}
implies that $\langle J^{\prime}(u),u\rangle=0$ by taking $u=\varphi$ since $\varphi$ is arbitrary. Therefore
\begin{equation}\label{5.4ff}
  {\begin{split}
     J(u) & = J(u)-\frac{1}{6}\langle J^{\prime}(u),u\rangle \\
       & =\frac{a+bA^2}{3}\int_{\R^3}|\nabla u|^2dx+\frac{1}{3}\int_{\R^3}|u|^2dx-\lambda(\frac{1}{q}-\frac{1}{6})\int_{\R^3}f(x)|u|^qdx\\
       &\geq \frac{a}{3}\int_{\R^3}|\nabla u|^2dx- \frac{\lambda(6-q)|f|_{\frac{6}{6-q}}}{6qS^{\frac{q}{2}}}\bigg(\int_{\R^3}|\nabla u|^2dx\bigg)^{\frac{q}{2}}+\frac{bA^2}{4}\int_{\R^3}|\nabla u|^2dx\\
       &\geq -C_3\lambda^{\frac{2}{2-q}}+\frac{bA^2}{4}\int_{\R^3}|\nabla u|^2dx,
   \end{split}
  }
\end{equation}
where $C_3=\frac{2-q}{2}\bigg(\frac{3q}{2a}\bigg)^{\frac{q}{2-q}}\bigg(\frac{\lambda(6-q)|f|_{\frac{6}{6-q}}}{6qS^{\frac{q}{2}}}\bigg)^{\frac{2}{2-q}}$. Since $\{u_n\}$ is bounded and then,
\begin{align*}
  o(1) & = \langle J^{\prime}(u_n),u_n\rangle\\
   &\stackrel{\mathrm{\eqref{5.4d}}}{=}\langle J^{\prime}(u),u\rangle+(a+bA^2)\int_{\R^3}|\nabla v_n|^2dx+\int_{\R^3}|v_n|^2dx-\int_{\R^3}Q(x)|v_n|^6dx\\
   &=(a+bA^2)\int_{\R^3}|\nabla v_n|^2dx+\int_{\R^3}|v_n|^2dx-\int_{\R^3}Q(x)|v_n|^6dx\\
   &\stackrel{\mathrm{\eqref{5.4d}}}{=}\|v_n\|^2+b\bigg(\int_{\R^3}|\nabla v_n|^2dx\bigg)^2+b\int_{\R^3}|\nabla v_n|^2dx\int_{\R^3}|\nabla u|^2dx-\int_{\R^3}Q(x)|v_n|^6dx,
\end{align*}
which is
\begin{gather}\label{5.4g}
         \|v_n\|^2+b\bigg(\int_{\R^3}|\nabla v_n|^2dx\bigg)^2+b\int_{\R^3}|\nabla v_n|^2dx\int_{\R^3}|\nabla u|^2dx-\int_{\R^3}Q(x)|v_n|^6dx=o(1).
\end{gather}
Without loss of generality, we can assume
$$
\|v_n\|^2\to \tilde{l}_1,b\bigg(\int_{\R^3}|\nabla v_n|^2dx\bigg)^2+b\int_{\R^3}|\nabla v_n|^2dx\int_{\R^3}|\nabla u|^2dx\to \tilde{l}_2, \int_{\R^3}Q(x)|v_n|^6dx\to \tilde{l}_3,
$$
that is $\tilde{l}_3=\tilde{l}_1+\tilde{l}_2$ by \eqref{5.4g}.

Next we will show that $u_n\to u$ in $H^1(\R^3)$. Just suppose not, then $\tilde{l}_1>0$ and $\tilde{l}_3>0$.
Similar to \eqref{h11}, we also have
\begin{equation}\label{5.4h}
  \frac{l_1}{3}+\frac{l_2}{12}\geq \frac{abS^3}{4|Q|_{\infty}}+\frac{b^3S^6}{24|Q|^2_{\infty}}+\frac{(b^2S^4+4a|Q|_{\infty}S)^{\frac{3}{2}}}{24|Q|^2_{\infty}}.
\end{equation}
By \eqref{5.4c}-\eqref{5.4dddd}, we have
\begin{eqnarray*}
J(u_n)&=& J(u)+\frac{a+bA^2}{2}\int_{\R^3}|\nabla v_n|^2dx+\frac{1}{2}\int_{\R^3}|v_n|^2dx-\frac{1}{6}\int_{\R^3}Q(x)|v_n|^6dx+o(1)\\
&=&J(u)+\frac{1}{2}\|v_n\|^2+\frac{b}{4}\bigg(\int_{\R^3}|\nabla v_n|^2dx\bigg)^2+\frac{b}{4}\int_{\R^3}|\nabla v_n|^2dx\int_{\R^3}|\nabla u|^2dx\\
&&-\frac{1}{6}\int_{\R^3}|v_n|^6dx+\frac{bA^2}{4}\int_{\R^3}|\nabla v_n|^2dx+o(1)\\
&\stackrel{\mathrm{\eqref{5.4g}}}{=}&J(u)+\frac{1}{3}\|v_n\|^2+\frac{b}{12}\bigg[\bigg(\int_{\R^3}|\nabla v_n|^2dx\bigg)^2+\int_{\R^3}|\nabla v_n|^2dx\int_{\R^3}|\nabla u|^2dx\bigg]\\
&&+\frac{bA^2}{4}\int_{\R^3}|\nabla v_n|^2dx+o(1).
\end{eqnarray*}
Letting $n\to\infty$, and by \eqref{5.4d}
\begin{eqnarray*}
\tilde{c}+\frac{bA^4}{4}&=& J(u)+\frac{l_1}{3}+\frac{l_2}{12}+\frac{bA^2}{4}\int_{\R^3}|\nabla v_n|^2dx+o(1)\\
&\stackrel{\mathrm{\eqref{5.4h}}}{\geq}&J(u)+\frac{abS^3}{4|Q|_{\infty}}+\frac{b^3S^6}{24|Q|^2_{\infty}}+\frac{(b^2S^4+4a|Q|_{\infty}S)^{\frac{3}{2}}}{24|Q|^2_{\infty}}+\frac{bA^2}{4}\int_{\R^3}|\nabla v_n|^2dx+o(1)\\
&\stackrel{\mathrm{\eqref{5.4ff}}}{\geq} &-C_3\lambda^{\frac{2}{2-q}}+\frac{abS^3}{4|Q|_{\infty}}+\frac{b^3S^6}{24|Q|^2_{\infty}}+\frac{(b^2S^4+4a|Q|_{\infty}S)^{\frac{3}{2}}}{24|Q|^2_{\infty}}+\frac{bA^4}{4}
\end{eqnarray*}
which implies that
$$
\tilde{c}\geq \frac{abS^3}{4|Q|_{\infty}}+\frac{b^3S^6}{24|Q|^2_{\infty}}+\frac{(b^2S^4+4a|Q|_{\infty}S)^{\frac{3}{2}}}{24|Q|^2_{\infty}}-C_3\lambda^{\frac{2}{2-q}},
$$
a contradiction! Then $\tilde{l}_1=0$ which is $u_n\to u$ in $H^1(\R^3)$.
\end{proof}
\vskip0.3cm
\textbf{Proof of Theorem \ref{theorem3}:} Using Lemmas \ref{5.1}-\ref{5.3} and repeating the processes as in Sections 3.1-3.3, the proof of Theorem \ref{theorem3} is clear. \hspace{\stretch{1}}$\Box$

\vskip0.5cm

\noindent{\textbf{Acknowledgements}: Both authors would like to express  their sincere gratitude to Professor Gongbao Li for his interest and some insightful discussions about the Kirchhoff problem. The authors was supported by NSFC (Grant No. 11371158), the program for Changjiang Scholars and Innovative Research Team in University (No. IRT13066).}

\vskip0.5cm


\begin{thebibliography}{99}

\bibitem{ap}
G. Kirchhoff, Mechanik, Teubner, Leipzig, 1883.

\bibitem{apa}
M. Chipot, B. Lovat, Some remarks on nonlocal elliptic and parabolic problems, Nonlinear Anal. 30 (1997) 4619-4627.


\bibitem{aq}
A. Arosio, S. Panizzi, On the well-posedness of the Kirchhoff string, Trans. Amer. Math. Soc. 348 (1996) 305-330.



\bibitem{ar}
 S. Bernstein,  une classe d'\'{e}quations fonctionnelles aux d\'{e}riv\'{e}es partielles. Bulletin de I'Acad¨¦mie des Sciences de I'URSS. VII. S\'{e}rie 1940; 4:17-26.

\bibitem{as}
 P. D'Ancona, S. Spagnolo, Global solvability for the degenerate Kirchhoff equation with real analytic data, Invent. Math. 108 (1992) 247-262.

\bibitem{bc}
J. L. Lions, On some questions in boundary value problems of mathematical physics, in:
Contemporary Developments in Continuum Mechanics and Partial Differential Equations,
Proceedings of International Symposium, Inst. Mat., Univ. Fed. Rio de Janeiro, Rio de Janeiro,
1977, in: North-Holland Math. Stud. vol. 30, North-Holland, Amsterdam, 1978, pp. 284-346.


\bibitem{bl1}
M.M. Cavalcanti, V.N. Domingos Cavalcanti, J.A. Soriano, Global existence and uniform decay rates for the Kirchhoff-Carrier equation with nonlinear dissipation, Adv. Differential Equations 6 (2001) 701-730.

\bibitem{bl2}
G. M. Figueiredo, N. Ikoma, J. R. Santos Junior, Existence and concentration result for the Kirchhoff type equations with general nonlinearities, Arch. Rational Mech. Anal. 213 (2014) 931-979.

\bibitem{bj}
X. He, W. Zou, Infinitely many positive solutions for Kirchhoff-type problems, Nonlinear Anal. 70 (2009) 1407-1414.

\bibitem{bw}
Y. He, G. Li, S. Peng, Concentrating Bound States for Kirchhoff type problems in ${\mathbb{R}^3}$ involving critical Sobolev exponents, Adv. Nonlinear Stud. 14 (2014) 441-468.

\bibitem{c}
G. Li, H. Ye, Existence of positive ground state solutions for the nonlinear Kirchhoff type equations in ${\mathbb{R}^3}$, J. Differential Equations 257 (2014) 566-600.


\bibitem{c1}
W. Liu, X. He, Multiplicity of high energy solutions for superlinear Kirchhoff equations, J. Appl. Math. Comput. 39 (2012)  473-487.

\bibitem{c2}
K. Perera, Z. Zhang, Nontrivial solutions of Kirchhoff-type problems via the Yang index, J. Differential Equations 221 (2006) 246-255.


\bibitem{ckw}
A. Ambrosetti, H. Br\'{e}zis, G. Cerami, Combined effects of concave and convex nonlinearities in some elliptic problems, J. Funct. Anal. 122 (1994) 519-543.

\bibitem{cl1}
T. Barstch, M. Willem, On a elliptic equation with concave and convex nonlinearities, Proc. Amer. Math. Soc. 123 (1995)
3555-3561.
\bibitem{cl2}
T. S. Hsu, Multiple positive solutions for a critical quasilinear elliptic system with concave-convex nonlinearities, Nonlinear Anal. 71 (2009) 2688-2698.

\bibitem{cl3}
J. Garcia-Azorero, I. Peral, J. D. Rossi, A convex-concave problem with a nonlinear boundary condition, J. Differential
Equations 198 (2004) 91-128.
\bibitem{cl4}
M. Bouchekif, A. Matallah, Multiple positive solutions for elliptic equations involving a concave term and critical
Sobolev-Hardy exponent, Appl. Math. Lett. 22 (2009) 268-275.

\bibitem{cl5}
V. Benci and G. Cerami, Existence of positive solutions of the equation $ - \Delta u + a(x)u = {u^{\frac{{N + 2}}
{{N - 2}}}}$ in ${\mathbb{R}^N}$, J. Funct. Anal. 88 (1990) 90-117.


\bibitem{cl6}
A. Floer, A. Weinstein, Nonspreading wave packets for the cubic Schr\"{o}dinger equation with a bounded potential, J. Funct. Anal. 69 (1986) 397-408.


\bibitem{cl7}
Y. G. Oh, On positive multi-lump bound states of nonlinear Schr\"{o}dinger equations
under multiple well potential, Commun. Math. Phys. 131 (1990) 223-253.

\bibitem{cl8}
I. Ianni and G. Vaira, On concentration of positive bound states for the Schr\"{o}dinger-Poisson problem with potentials, Adv. Nonlinear Stud. 8 (2008) 573-595.

\bibitem{cl9}
J. Sun, T. F. Wu, Z. Feng, Multiplicity of positive solutions for a nonlinear Schr\"{o}dinger-Poisson system, J. Differential Equations 260 (2016) 586¨C627.



\bibitem{h}
C. Y. Chen, Y. C. Kuo, T. F. Wu, The Nehari manifold
for a Kirchhoff type problem involving sign-changing weight
functions, J. Differential Equations 250 (2011) 1876-1908.

\bibitem{hb}
S. H. Liang, S. Y. Shi, Soliton solutions to Kirchhoff type problems involving the critical growth in $\R^N$, Nonlinear Anal. 81 (2013) 31-41.

\bibitem{hc}
P. Pucci, M. Xiang, B. Zhang, Multiple solutions for nonhomogeneous Schr\"{o}dinger-Kirchhoff type equations involving the fractional $p$-Laplacian in $\R^N$, Calc. Var. Partial Differential Equations 54 (2015) 2785-2806.

\bibitem{hc1}
Y. He, G. Li, Standing waves for a class of Kirchhoff type problems in ${\mathbb{R}^3}$ involving critical Sobolev exponents, Calc. Var. Partial Differential Equations 54 (2015) 3067-3106.


\bibitem{i}
C. Y. Lei, G. S. Liu, L. T. Guo, Multiple positive solutions for a Kirchhoff type problem with a
critical nonlinearity, Nonlinear Anal. 31 (2016) 343-355.



\bibitem{jj}
F. Gazzola, M. Lazzarino, Existence results for general critical growth semilinear ellipitic equations,
Commun. Appl. Anal. 4 (2000) 39-50.

\bibitem{jj2}
L. Zhao, F. Zhao, Positive solutions for Schr\"{o}dinger-Poisson equations with a critical exponent, Nonlinear
Anal. 70 (2009) 2150-2164.

\bibitem{jj3}
Z. Liu, S. Guo, Existence and concentration of positive ground states for a Kirchhoff equation involving
critical Sobolev exponent, Z. Angew. Math. Phys. 66 (2015) 747-769.

\bibitem{jj4}
J. Wang, L. Tian, J. Xu, F. Zhang, Multiplicity and concentration of
positive solutions for a Kirchhoff type problem with critical
growth, J. Differential Equations  253 (2012) 2314-2351.

\bibitem{jj5}
G. Li, H. Ye, Existence of positive solutions for nonlinear
Kirchhoff type problems in $\R^3$ with critical
Sobolev exponent, Math. Meth. Appl. Sci. 37 (2014) 2570-2584.

\bibitem{jj5a}
W. Shuai, Sign-changing solutions for a class of Kirchhoff-type problem in bounded domains, J. Differential Equations 259 (2015) 1256-1274.
\bibitem{jj6}
X. Tang, B. Cheng, Ground state sign-changing solutions for Kirchhoff type problems in bounded domains, J. Differential Equations 261 (2016) 2384-2402.

\bibitem{jj7}
M. Willem, Minimax theorems, Progress in Nonlinear Differential Equations and their Applications, 24. Birkh\"{a}user Boston, Inc., Boston, MA, 1996.


\bibitem{ja}
H. Br\'{e}zis, L. Nirenberg, Positive solutions of nonlinear elliptic equations involving critical Sobolev exponents, Comm. Pure Appl. Math. 36 (1983) 437-477.


\bibitem{jaa}
P. L. Lions, The concentration-compactness principle in the calculus of variations, The locally compact case, part 2, Ann. Inst. H. Poincar\'{e} Anal. Non.  Lin\'{e}aire 2 (1984) 223-283.




\bibitem{jb}
P. L. Lions, The concentration-compactness principle in the calculus of variations, The limit case,
part 1, Rev. Mat. H. Iberoamericano 1.1 (1985) 145-201.

\bibitem{j}
I. Ekeland, Nonconvex minimization problems, Bull. Amer. Math. Soc. 1 (1979) 443-473.


\bibitem{jc}
G. Li, H. S. Zhou, The existence of a weak solution of inhomogeneous quasilinear
elliptic equation withcritical growth conditions, Acta Math. Sinica, 11 (1995) 146-155.

\bibitem{jd}
H. Br\'{e}zis, E.Lieb, Arelation between pointwise convergence of functions and convergence of functionals, Proc. Amer. Math. Soc. 88 (1983) 486-490.

\end{thebibliography}
\end{document}